\newcommand{\id}{\mathbbm{1}}
\newcommand{\supp}{\mbox{\rm supp}\; }
\newcommand{\grad}{\nabla{}}
\renewcommand{\div}{\mbox{\rm div}}
\newtheorem{theorem}{Theorem}
\newtheorem{lemma}{Lemma}
\newtheorem{remark}{Remark}
\newtheorem{definition}{Definition}
\newtheorem*{theorem*}{Theorem}
\newtheorem{hypothesis}{Hypothesis}
\newcommand{\e}{\varepsilon}
\renewcommand{\span}{\mbox{\rm span}}
\newcommand{\barl}{\bar{l}}
\newcommand{\proj}{\mbox{\rm proj}}
\newcommand{\esssup}{\mbox{\rm ess\,sup}}
\newcommand{\E}{E^*}
\newcommand{\hatl}{\hat{l}}
\newcommand{\checkl}{\check{l}}
\title[Classical solutions of gas-dynamic equations]{
A kinetic transport-projection splitting algorithm for an hierarchy  of moment closures of gas-kinetic equations}
\author{Misha Perepelitsa}
\thanks{Email: misha@math.uh.edu}
\date{\today}
\address{Misha Perepelitsa\\
misha@math.uh.edu\\
University of Houston\\
PGH 631\\
4800 Calhoun Rd. \\
Houston, TX\\
USA}
\begin{document}
\begin{abstract}
We review some geometrical properties of models of moment closures of gas-kinetic equations,
and consider a transport-projection  splitting scheme for construction of solutions of such closures. The scheme, 
formulated in terms of a dual kinetic density, defines the
kinetic density in successive superposition of transport in $x$--direction and projection  to a finite dimensional linear space in a weighted $L^2$ space, in the kinetic variable $v.$  Given smooth initial data, we show that  the approximate solutions  converge to a unique classical solution of a system of moment closure PDEs. 

\end{abstract}

\maketitle

\begin{section}{Introduction}
\subsection{Motivation}
In a kinetic description of fluid motion the state of the gas is defined by a kinetic function $f(x,t,v),$ that determines the distribution of molecules at position $x$ and time $t$ according to the velocity $v,$ and a kinetic equation for $f,$
\begin{equation}
\label{eq:Kinetic}
\partial_t f{}+{}v\cdot\grad_x f{}={}Q(f),
\end{equation}
where the right-hand side determines the changes in the kinetic density due to molecular interactions. In the kinetic models of gases, the collision operator $Q(f)$ verifies the following properties:
\begin{enumerate}
\item $Q$ has zero moments:
\[
\int(1,v,|v|^2)Q(f)\,dv{}={}0;
\]
\item $Q(f){}={}0$ iff $f\in E_0,$ where $E_0$ is the set of minimizers of the problem
\begin{equation*}
\label{prob:min}
\min\left\{ S(f)\,:\, f\geq0,\quad \int(1,v,|v|^2)f\,dv{}={}const.\right\}
\end{equation*}
with an entropy functional
\[
S(f){}={}\int s(f)\,dv,
\]
were $s\,:\,\mathbb{R}\to\mathbb{R}$ is a convex, coercive function;
\item interactions do not increase entropy: for any kinetic density $f,$
\[
\int Q(f)s'(f)\,dv\leq0.
\]
\end{enumerate}
The above properties are due to the conservation of mass, momentum, and energy
of molecular motion, and express the fact that molecular interactions have an effect on the kinetic density to ``relax" toward the set of equilibrium densities $E_0.$

Assuming that relaxation processes are instantaneous, the kinetic density takes values $f(x,t,\cdot)\in E_0,$ for all $(x,t),$ i.e, it is a function only of its $(1,v,|v|^2)$ moments, that we denote by $U{}={}(\rho,m,E)$ -- the macroscopic density, momentum and energy. The moments verify the system of Euler equations,
\begin{equation}
\label{eq:Euler}
\int \left(\partial_t f{}+{}v\cdot\grad_x f\right)\left(\begin{array}{c}1 \\ v\\|v|^2\end{array}\right)\,dv{}={}0,
\end{equation}
which should be supplemented with the initial/boundary conditions for a particular fluid flow in question.  System of equations \eqref{eq:Euler} is a first order quasi-liner system of PDEs:
\begin{equation}
\label{eq:Hyperbolic}
\partial_t U{}+{}\div_x F(U){}={}0,
\end{equation}
where  the flux $F\,:\, \mathbb{R}^5\to\mathbb{R}^{5\times 3}.$

The conservation of entropy at the kinetic level ($Q(f){}={}0$) leads (for smooth kinetic functions) to the conservation of macroscopic entropy
\[
\partial_t S{}+{}\div_x Q_e{}={}0,
\]
where
\[
S(U){}={}\int s(f)\,dv,\quad Q_e(U){}={}\int vs'(f)\,dv.
\]

In flows away from vacuum, $\rho>0,$ the entropy is a strictly convex function of $U,$
and that makes system \eqref{eq:Hyperbolic} to be {\it symmetrizable, hyperbolic} system of conservation laws. The Cauchy problem for such systems is well-posed in classes of smooth functions, as was established in \cite{Garding, Kato}. Specifically, the following result holds, theorem 5.1.1 of \cite{Dafermos}.

Consider a symmetrizable, hyperbolic system of $m$ conservation laws
\begin{equation}
\label{eq:Hyperbolic:2}
\partial_t U{}+{}\div_x F(U){}={}G(U),
\end{equation}
with $F\in C^4(O)^{m\times d},$ $G\in C^3(O)^m,$ and  entropy $S\in C^3(O),$ on an open subset $O\subset\mathbb{R}^d.$ Assume that $S$ is strictly convex on $O$ and the initial data $U_0(x)$ belong to a compact subset of $O,$ with 
\[
\grad U_0\in H^l,\quad l>d/2.
\]
\begin{theorem*}
There is a time interval $[0,T),$ on which  system of equations \eqref{eq:Hyperbolic:2} with initial data $U_0$ has a unique classical solution $U(x,t).$ Solution belong to the class
\[
\grad U{}\in{}\cap_{k=0}^{l}C^k([0,T): H^{k-l}).
\]
\end{theorem*}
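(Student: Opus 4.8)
The plan is to follow the classical energy-method proof of local well-posedness for symmetric hyperbolic systems (Friedrichs, Kato, Majda), exploiting the strictly convex entropy $S$ as a symmetrizer. First I would symmetrize the system: setting $A_0(U) = D^2 S(U)$, strict convexity of $S$ on the compact subset of $O$ makes $A_0$ symmetric and uniformly positive definite there, while the entropy compatibility relation forces each product $A_0(U)\,DF_i(U)$ to be symmetric. Multiplying \eqref{eq:Hyperbolic:2} by $A_0(U)$ thus yields
\begin{equation*}
A_0(U)\partial_t U + \sum_{i=1}^d A_i(U)\,\partial_{x_i} U = A_0(U) G(U),
\end{equation*}
with every $A_i = A_0\,DF_i$ symmetric. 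Working with $\grad U \in H^l$ is equivalent, up to a fixed reference state $\bar U$, to $U \in L^\infty$ with $U - \bar U \in H^{l+1}$; the $L^\infty$ control keeps $U$ inside a fixed compact subset of $O$ on which $A_0^{-1}$ and the $A_i$ remain bounded and smooth.

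Next I would set up a Picard iteration, defining $U^{(n+1)}$ as the solution of the linear symmetric hyperbolic system obtained by freezing the coefficients at $U^{(n)}$, with $U^{(n)}(\cdot,0) = U_0$ for all $n$; solvability of each linear problem is standard. The heart of the argument is a uniform a priori estimate. Applying $\partial^\alpha_x$ for $|\alpha| \le l+1$, taking the $A_0(U^{(n)})$-weighted $L^2$ inner product with $\partial^\alpha_x U^{(n+1)}$, and integrating by parts using symmetry of the $A_i$, the top-order term collapses into the harmless factor $\partial_t A_0 + \sum_i \partial_{x_i} A_i$. The commutators between $\partial^\alpha_x$ and the coefficient matrices are controlled by Moser / Kato--Ponce inequalities, which is precisely where $l > d/2$ enters: through the embedding $H^l \hookrightarrow L^\infty$ (bounding $\|\grad U^{(n)}\|_{L^\infty}$) and through $H^{l+1}$ being a Banach algebra. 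This produces a differential inequality of the form $\tfrac{d}{dt} E^{(n+1)} \le C\big(E^{(n)}\big)\,(1 + E^{(n+1)})$ for the weighted energy $E = \|U - \bar U\|_{H^{l+1}}^2$, from which one extracts a time $T>0$ and a radius $R$, depending only on $\|U_0 - \bar U\|_{H^{l+1}}$, so that every iterate stays in the ball of radius $R$ in $C([0,T]; H^{l+1})$ and maps into the chosen compact subset of $O$.

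I would then prove convergence by estimating the differences $U^{(n+1)} - U^{(n)}$ in the low norm $L^2$: these solve linear symmetric systems whose sources are dominated by the uniform high-norm bound, giving a contraction on a possibly smaller interval and hence a Cauchy sequence in $C([0,T]; L^2)$. Interpolating this against the uniform $H^{l+1}$ bound produces a limit $U \in L^\infty([0,T]; H^{l+1}) \cap C([0,T]; H^{l+1-\delta})$ that solves the system. Upgrading weak-$\ast$ to strong continuity at the top index, so that $U \in C([0,T]; H^{l+1})$, follows in the usual way from the fact that the energy identity for the symmetric system holds in both time directions, making $t \mapsto \|U(t)\|_{H^{l+1}}$ continuous. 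The asserted time regularity then comes from repeatedly solving for $\partial_t U = A_0^{-1}\big(A_0 G - \sum_i A_i\,\partial_{x_i} U\big)$: each time derivative costs one spatial derivative, giving $\partial_t^k U \in C([0,T]; H^{l+1-k})$, i.e. the stated class for $\grad U$. Uniqueness is the easiest step: the difference of two solutions obeys a linear symmetric system, and a single $L^2$ energy estimate with Gronwall forces it to vanish.

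The main obstacle I anticipate is closing the a priori estimate at the top Sobolev level \emph{uniformly in the iteration index} $n$, that is, arranging the energy inequality so that it loses no derivatives. This is exactly what the symmetric structure provides for the top-order integration by parts and what the commutator and product estimates provide below top order, and it is the reason that both the strict convexity of the entropy and the threshold $l > d/2$ are indispensable.
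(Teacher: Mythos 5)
Your proposal is correct and follows essentially the same route the paper relies on: the paper gives no proof of its own, citing Theorem 5.1.1 of \cite{Dafermos} (after \cite{Garding, Kato}) and describing exactly your strategy --- a fixed-point/iteration argument on the linearized equations, with the Hessian of the strictly convex entropy serving as the symmetrizer, closed by high-norm energy estimates and a low-norm contraction. The one inaccuracy is your claim that $\grad U_0\in H^l$ is ``equivalent'' to $U_0\in L^\infty$ with $U_0-\bar U\in H^{l+1}$: the gradient condition does not imply $U_0-\bar U\in L^2$, so as written you prove the theorem under a slightly stronger decay hypothesis on the data, whereas the cited proof closes the energy estimates on $\grad U$ alone (controlling $\|U\|_{L^\infty}$ through the equation) to handle the stated hypothesis.
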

The solution is constructed by a fixed point of a map, determined by a solution of the linearized equations \eqref{eq:Hyperbolic:2}. Alternatively, the solution can obtained in a zero viscosity limit, using the theory of parabolic systems.

In the analysis of non-equilibrium flows, it might be desirable to approximate equation \eqref{eq:Kinetic}  by a closed system of PDEs for a finite set of macroscopic parameters (moments).  A moment closure is an example of such reduction of dimension, which is based  on an ubiquitous idea of Galerkin approximation. A generic form of a moment closure was described in \cite{Gorban1, Gorban2}, and \cite{Levermore}. The moment closures of \cite{Grad:moments, Dreyer} are earlier, notable examples of such approximations. 

Let us consider moment closures is some detail, following \cite{Levermore} for the presentation. The closures are taken with respect to the moments 
\[
\int fl_i(v)\,dv,\quad i=1..k,
\]
where $\{l_i\}_{i=1}^k$ is a set ``elements".  Given the tendency of the kinetic density to an equilibrium in $E_0,$ it is reasonable to include polynomials $\{1,v,|v|^2\}$ among moments. Further restrictions on the set of moments can be imposed by requirement that  the corresponding  system of PDEs \eqref{eq:Hyperbolic:2} verifies Galilean and rotational symmetries, see \cite{Levermore}.

Denote
\[
E^*{}={}\span\{l_i,\,i=1..k\}.
\]
Consider a minimization problem
\begin{equation}
\label{minimization}
\min\left\{ S(f)\,:\, \int fl_i(v)\,dv{}={}U_i=const.\right\}
\end{equation}
where the entropy functional is as above. For a given vector $U{}={}(U_0,..,U_k),$
the problem (typically) has a unique minimizer $f_0,$, determined by the conditions
\begin{eqnarray*}
\exists g_0\in E^*,\quad g_0\in\partial S(f_0),\\
\int f_0 l_i(v)\,dv{}={}U_i,\quad i=1..k.
\end{eqnarray*}
The first condition defined $g_0{}={}s'(f_0)\in E^*,$ or $f_0{}={}(s^*)'(g_0),$ where $s^*$ is the Legendre transform of $s.$\footnote{ We always assume the duality pairing between functions to be $
\langle f,g \rangle{}={}\int fg\,dv. $ A functional  space $V$ for kinetic density $f$ can be define 
\[
V{}={}\left\{f\,:\, \int (1+|v|^{m_0})|f|\,dv<+\infty\right\},
\]
where $m_0$ is the highest degree of polynomials in the set $\{l_i\}.$ In this section we proceed informally, identifying, for example, a subdifferential $\partial S(f)$ with a function $s'(f).$ 
}

Define $E$ to be a set of minimizers $f_0$ for all choices of moments vector $U.$
A moment closure of  \cite{Levermore} is  defined as a system of equations
\begin{equation}
\label{eq:Moment_closure:1}
\begin{cases}
\int \left( \partial_t f{}+{}v\cdot\grad_x f- Q(f)\right)l_i(v)\,dv{}={}0,\quad i=1..k,\\
f(x,t,\cdot)\in E,
\end{cases}
\end{equation}
which is equivalent to a first order quasi-linear system of PDEs of type \eqref{eq:Hyperbolic:2} in unknowns $U_i(x,t){}={}\int f(x,t,v)l_i(v)\,dv.$ The corresponding entropy equation reads:
\[
\partial_t\int s(f)\,dv{}+{}\div_x\int vs'(f)\,dv{}={}\int Q(f)s'(f)\,dv.
\]

Following \cite{Levermore}, the convexity of $S=S(U)$ can be conveniently expressed using dual, hydrodynamic variables $\alpha_i=\alpha_i(x,t),$ related to $f$ by the condition
\[
s'(f(x,t,v)){}={}\sum_i \alpha_i(x,t)l_i(v)\in E^*.
\]
Indeed, the definition of $U$ is stated as
\begin{equation}
\label{eq:Uj}
U_j{}={}\int fl_j(v)\,dv{}={}\int (s')^{-1}(\sum_i\alpha_il_i(v))\,dv.
\end{equation}
Since $S(U){}={}\int s(f)\,dv$ we obtain that
\[
\partial_{U_j}S{}={}\int s'(f)\partial_{U_j}f\,dv{}={}\int (\sum_i\alpha_il_i)\partial_{U_j}f\,dv{}={}\alpha_j.
\]
Thus, $\partial^2_{U_jU_k}S{}={}\partial_{U_k}\alpha_j.$ Also, from \eqref{eq:Uj}
we also get 
\[
\partial_{\alpha_i}U_j{}={}\int\frac{l_j(v)l_i(v)}{s''(\sum\alpha_il_i)}\,dv,
\]
which makes $\partial_\alpha U$ a positive definite. Its inverse is positive definite as well, and so is $\grad^2_{U}S.$

\subsubsection{Orthogonality in primal variables}
An alternative way to derive system \eqref{eq:Moment_closure:1} is to use the differential structure of sets appearing in the optimization problem \eqref{minimization}, following the approach of \cite{Gorban1, Gorban2}.

Consider a kinetic density $f$ and a set of density with the same $l_i$ moments:
\[
M(f){}={}\left\{ \tilde{f}\,:\, \int (\tilde{f}-f)l_i(v)\,dv{}={}0,\,i=1..k\right\}.
\]
Let $f_0\in M_f\cap E_0$ be the minimizer of problem \eqref{minimization}.
Define the tangent plane to $M_f$ at $f_0$ as
\[
T_{M(f)}=\left\{\hat{f}\,:\, \int \hat{f}l_i(v){}={}0,\,i=1..k\right\}.
\]
Since it is independent of $f$ we simply write $T_M.$ The set $E,$ defined above, can be defined as 
\[
f_0\in E\iff \int s'(f_0)\hat{f}\,dv{}={}0,\quad \forall \hat{f}\in T_{M}.
\]
With this definition we can define the tangent space to $E$ at $f_0,$ denoted by $T_E(f_0),$ as 
the set of vectors $\bar{f}$ such that 
\[
\forall \hat{f}\in T_{M},\quad \lim_{t\to0}t^{-1}\int s'(f_0+t\bar{f})\hat{f}\,dv{}={}0.
\]
The condition for being an tangent vector can be equivalently stated as
\begin{equation}
\label{eq:ortho:1}
\int s''(f_0)\hat{f}\bar{f}\,dv{}={}0,\quad \forall \hat{f}\in T_M.
\end{equation}
One can interpret this condition as orthogonality between the tangent spaces to $M$ and $E,$ in a weighted $L^2$ space with scalar product
\[
(\hat{f},\bar{f}){}={}\int \hat{f}\bar{f}\,s''(f_0)dv.
\]
In this notation, $T_M{}={}\left(T_E(f_0)\right)^{\perp}.$ Now, the moment closure system \eqref{eq:Moment_closure:1} can be  equivalently expressed as a differential inclusion
\begin{equation}
\label{eq:Levermore}
\begin{cases}
\partial_tf +v\cdot\grad_x f-Q(f)\in \left(T_E(f_0)\right)^{\perp},\\
 f(x,t,\cdot)\in E,
\end{cases}
\end{equation}
or, by noticing that $\partial_t f\in T_E(f),$ as an equation
\begin{equation}
\label{eq:Gorbin}
\begin{cases}
\partial_tf{}={} \proj_{T_E(f)}(-v\cdot\grad_x f+Q(f)),\\
 f(x,t,\cdot)\in E.
\end{cases}
\end{equation}
The later equation is in the form used in \cite{Gorban1, Gorban2}.

\subsubsection{Orthogonality in dual variables}
Finally, lets consider yet another way to pose a moment closure, expressing \eqref{eq:Levermore} is dual kinetic variables
\[
l{}={}s'(f),\quad (l\in\partial S(f)).
\]
Condition \eqref{eq:ortho:1} carries over to $l$--variables and becomes,
\begin{equation}
\label{eq:ortho:1.5}
\int (s^*)''(l_0)\hat{l}\bar{l}\,dv{}={}0,\quad \forall \hat{l}\in E^*,\,\hat{l}\in T_{M^*(l_0)}.
\end{equation}
It expresses the orthogonality of linear space $E^*$ and tangent space to $M^*(l_0){}={}\{l\,:\, \int ((s^*)'(l)-(s^*)'(l_0))l_i(v)\,dv{}={}const.\}$

Orthogonality condition \eqref{eq:ortho:1.5} has another interpretation. Recall from the Convex Analysis, \cite{Ekland}, proposition 2.4, that the values of the primal problem
\[
\min\left\{ S(f)\,:\, f\geq0,\,\int(f-\check{f})l_i(v)\,dv{}={}0,\,\,i=1..k\right\}
\]
(for a fixed $\check{f}\geq0,$) and its dual
\begin{equation}
\label{prob:dual}
\sup\left\{ \int l\check{f}\,dv{}-{}S^*(l)\,:\,l\in E^*\right\}
\end{equation}
where $S^*(l){}={}\int s^*(l)\,dv$ is the Legendre transform of $S(f),$ are equal. The minimizer $f_0$ and the maximizer $l_0$ are determined by the conditions
\begin{eqnarray}
l_0\in\partial S(f_0),\quad (l_0{}={}s'(f_0),\,f_0{}={}(s^*)'(l_0)), \notag \\
\label{cond:moments}
\int (f_0-\tilde{f})l_i(v)\,dv{}={}0,\quad i=1..k.
\end{eqnarray}

Choose  $h$ that verifies condition \eqref{eq:ortho:1.5}: 
\begin{equation*}
\int (s^*)''(l_0)hl_i\,dv{}={}0,\quad i=1..k,
\end{equation*}
and let  $l_0\in E^*.$ Consider the above maximization problem with $\tilde{f}{}={}
(s^*)'(\tilde{l}),$ $\tilde{l}{}={}l_0+th.$ Let $l^{t}_0$ be the corresponding maximizer. Re-writing condition \eqref{cond:moments} as
\[
\int ( (s^*)'(l_0+th)-(s^*)'(l^{t}_0))l_i(v)\,dv{}={}0.
\]
we see that due to assumptions on $h,$
\[
l^{t}_0-l_0{}={}o(t),\quad l_0{}={}\proj_{E^*}(\tilde{l}),
\]
where the projection with respect ot weighted $L^2$ norm. In other words, the solution of the optimization problem \eqref{prob:dual} with $\tilde{f}{}={}
(s^*)'(\tilde{l}),$ coincides (to the first order of distance from $E^*$) with the projection of $\tilde{l}$ onto $E^*.$

This considerations allow us to re-write the moment closure equations in \eqref{eq:Levermore} as  a differential inclusion in dual variable $l:$
\begin{equation}
\label{eq:Misha}
\begin{cases}
\partial_t l{}+{}v\cdot\grad_x l{}-{}\tilde{Q}(l)\in (E^*(l))^{\perp},\\
 l(x,t,\cdot)\in E^*,
\end{cases}
\end{equation}
where the collision operator equals
\[
\tilde{Q}(l){}={}\frac{Q((s^*)'(l))}{(s^*)''(l)}.
\]
We certainly could have arrived at \eqref{eq:Misha} directly from \eqref{eq:Levermore}, but the above arguments  show that there is also an underlying variational principle. 


Let us remark, that the  dual kinetic variables and weighted $L^2$ spaces, discussed above, has been in use in the theory of Boltzmann equations since the work of Hilbert\cite{Hilbert}, where they appear in a context of  linearization  of  \eqref{eq:Kinetic}. In a typical linearization analysis, kinetic density is represented in terms of a dual variable $h,$ as $f{}={}f_0(1+h),$ where $f_0$ is a Maxwellian ($f_0\in E_0$).

\subsection{Results}
In this work we establish the existence of classical solutions  to a class of systems of PDEs \eqref{eq:Hyperbolic:2} corresponding to \eqref{eq:Misha}, by solving the later problem in a space of kinetic functions. We assume that initial data $l^0(x,\cdot)$ take values in $E^*,$ ranging in a neighborhood of a constant state $\bar{l}\in E^*,$ and, is in Sobolev's $H^3$ space, as a function of $x.$


Two types of collision operators are considered. In the first model, the collision operator is absent, $\tilde{Q}=0,$ and we are dealing with projection of a transport equation onto $\E.$ In the second model, we consider a non-linear  BGK--type operator, in dual variable $l:$
\[
\tilde{Q}(l){}={}\Pi^0_{l}-l,\quad \Pi^0_l{}={}\proj_{E_0^*(l)}(l).
\]
The corresponding operator in the primal variable, $Q(f){}={}(s^*)''(l)\tilde{Q}(l),$ $l{}={}(s)'(f),$
verifies properties (1), (2), and (3) of the collision operators, stated at the beginning of the Introduction. It is a first order (in the distance from $E_0$)  approximation of the classical BGK operator.

We show that classical solutions of \eqref{eq:Misha} can be constructed in zero limit of step $h,$ of a time-discretization of \eqref{eq:Misha}, in which transport, collision and projection are computed in succession, over time intervals $(nh, (n+1)h],$ as defined in \eqref{Projection}, \eqref{approx:continuous:1}, \eqref{Projection:2}.

The analysis is based on entropy estimates for the kinetic density and its $x$--derivatives, that  are similar to the estimates for linearized Boltzmann equation  obtained in \cite{Grad:asymptote}.

%

The choice of the approximating scheme is not accidental. In fact, it is the convergence of that particular scheme that we're interested in, rather than finding a new way to prove the existence of classical solutions for a class of PDEs \eqref{eq:Hyperbolic:2}. The reason for this, is an observation that the time discretization of \eqref{eq:Misha}, with $E^*{}={}E_0^*,$ and $\tilde{Q}{}={}0,$  is linked to a hydrodynamic limit of a gas-kinetic equation \eqref{eq:Kinetic} with the right-hand side containing a large factor $h^{-1}.$ The projection of the kinetic density to an equilibrium $E_0$ can be loosely related to result of collisions, since the later amounts to relaxing the density toward equilibrium, while conserving the moments.  

In this respect, our convergence result can be compared with the works on the fluid dynamic limit of Boltzmann and related equations, with some representative results given in  \cite{Nishida_Boltzmann, Caflisch, CaflischPapa, BB, BV}. 

In the present setting, the convergence takes place for all times $t$ inside an interval $[0,T]$ determined by the initial data. No initial layer is present since the dynamics is smooth and starts from the target  manifold $E^*.$

Finally, let us mention that discrete transport--projection 
approximations appear in many areas of PDEs. Some examples of the method, in the context of Boltzmann equation and scalar conservation laws can be found in \cite{Gorban3, Brenier1, Brenier2}.

\end{section}


\begin{section}{Transport equation}

\subsection{Notation and auxiliary lemmas}

Let $\{l_i(v)\}_{i=1}^{k}$ be a set of $k+1$ linearly independent on an open set of $\mathbb{R}^d$ polynomials. We assume that the highest degree polynomial is $l_k(v){}={}|v|^{m_0},$ for some $m_0>0,$ and the lowest degree polynomial $l_1(v){}={}1.$
Denote
\[
\E{}={}\span\{l_i,\:\,i=1..k\}.
\]

We choose entropy density to be $s(f){}={}f^p,$ with $p\in(1,6/5).$ The Legendre transform of $s$ equals 
\[
s^*(l){}={}c_p(l_+)^{p/{p-1}},
\]
with $c_p{}={}\frac{p-1}{p^{p/(p-1)}},$
and  $l_+$ is a positive part of $l.$

For a notational convenience we define the weight function with respect to variable $-l,$ rather than $l,$ 
\[
w(l){}={}\bar{c}_p(l_{-})^{\frac{2-p}{p-1}},
\]
where $\bar{c}_p>0,$ and $l_{-}\geq0$ -- the negative part of $l.$ 
For the range of $p$ defined above, $w\in C^4(\mathbb{R}),$ convex function, supported on   $l\geq0.$ With the above choice of an entropy $s$, kinetic densities $f\in E,$ are smooth and compactly supported functions. The analysis critically depends on last two properties.

We consider a Cauchy problem
\begin{equation}
\label{model:1}
\begin{cases}
\partial_t l {}+{}v\cdot\grad_x l{}\in{} (E^*(l))^\perp, & (x,t,v)\in \mathbb{R}^3\times(0,T)\times\mathbb{R}^3,\\
l(x,t,\cdot)\in E^*, & (x,t)\in\mathbb{R}^4_+, \\
l(x,0,v){}={}l^0(x,v), & (x,v)\in\mathbb{R}^6,
\end{cases}
\end{equation}
where notation $\E(l)$ denotes space $\E$ with weighted $L^2$ norm
\[
\|\tilde{l}\|^2_{w(l)}{}={}\int \tilde{l}^2\,w(l)\,dv.
\]

Following \cite{Dafermos},
 we make the following definition.
\begin{definition}
A classical solution $l,$ of \eqref{model:1}
as a Lipschitz continuous in $(x,t,v)$ function $l(x,t,\cdot),$ such that for a.e. $(x,t),$ $t\geq0,$ the set of $k+1$ equations holds,
\[
\int \left( \partial_t l{}+{}v\cdot\grad_x l\right)l_i(v)\,dv{}={}0,\quad i=1..k;
\]

for all $(x,t),t\geq0,$ $l(x,t,\cdot)\in \E;$  for $t=0,$ and all $(x,v),$ $l(x,0,v){}={}l^0(x,v).$ 
\end{definition}

A care should be taken to avoid degenerate situation when the weight $w(l)$ is zero.   
\begin{definition} Let $(R,\delta_1,r,\delta_2)$ be positive numbers. We say that $l\in \E$ has property $P(R,\delta_1,r,\delta_2)$ if
\begin{enumerate}
\item $\forall v,$ with $|v|>R,$ $
l(v)\geq0;
$
\item $\forall v,$ with $|v|<R,$ 
$ l(v)\geq -\delta_1;
$
\item there is a ball $B_r$ of radius $r$ such that 
$
  l(v)\leq -\delta_2,
$
for all $v\in B_r.$
\end{enumerate}
\end{definition}
Let $\barl{}={}\sum_i\bar{\gamma}_il_i(v)\in \E$ be a constant reference state with 
\[
\bar{\gamma}_0<0,\quad \bar{\gamma}_k>0.
\]
The solutions we construct  are in a neighborhood of  $\bar{l}.$ The following lemma is easily verified.
\begin{lemma} 
\label{lemma:nondegenerate}
There are positive numbers $(R,\delta_1,r,\delta_2)$ such that 
\[
\barl\in P\left(R/2,\delta_1/2,2r,2\delta_2\right),
\]
and 
$\forall \e>0$ there is $\Delta>0,$ such that, if $l\in \E$ and
\[
\int_{B_R}|l-\barl|^2\,dv\leq \Delta,
\]
then
\[
l\in P\left(\frac{1+\e}{2}R,\frac{1+\e}{2}\delta_1, \frac{2}{1+\e}r,\frac{2}{1+\e}\delta_2\right).
\]
\end{lemma}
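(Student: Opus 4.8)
The plan is to prove the two assertions separately; both rest on the structural facts that $\barl$ is a polynomial whose constant coefficient is negative while its leading $|v|^{m_0}$ coefficient $\bar{\gamma}_k$ is positive, and that $\E$ is finite dimensional. For the first assertion, note that $\bar{\gamma}_k>0$ forces $\barl(v)\to+\infty$ as $|v|\to\infty$, so the sublevel set $\{\barl\leq0\}$ is bounded; setting $\rho_0:=\sup\{|v|:\barl(v)\leq0\}<\infty$ and choosing $R>2\rho_0$ gives $\barl>0$ on $\{|v|>R/2\}$, which is condition (1). On the compact ball $\{|v|\leq R/2\}$ continuity furnishes a finite lower bound $-m$, so any $\delta_1\geq 2m$ yields condition (2). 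Finally, since the non-constant basis polynomials vanish at the origin, $\barl(0)=\bar{\gamma}_0<0$, and continuity gives a ball of radius $2r$ on which $\barl\leq -2\delta_2$ for $r,\delta_2$ small. Choosing $R,\delta_1$ first and then $r,\delta_2$ shows all four numbers coexist, so $\barl\in P(R/2,\delta_1/2,2r,2\delta_2)$.

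For the second assertion the key observation is that $p\mapsto p|_{B_R}$ is injective on $\E$, since a polynomial vanishing on the open ball $B_R$ is identically zero. Hence $\|p\|_{L^2(B_R)}$ is a genuine norm on the finite-dimensional space $\E$, and is therefore equivalent to the weighted sup norm $\|p\|_*:=\sup_v |p(v)|/(1+|v|^{m_0})$, which is finite because every $\deg l_i\leq m_0$. Consequently the hypothesis $\int_{B_R}|l-\barl|^2\,dv\leq\Delta$ yields $\|l-\barl\|_*\leq C\sqrt{\Delta}=:\eta$, with $C$ depending only on the basis and $R$. It then remains to verify the three conditions of $P$ for $l$ by absorbing $\eta$ into the margins produced in the first part.

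For condition (1) on the unbounded set $\{|v|\geq\frac{1+\e}{2}R\}$, I use that $\frac{1+\e}{2}R>R/2>\rho_0$, so the quotient $g(v):=\barl(v)/(1+|v|^{m_0})$ is continuous and positive there and tends to $\bar{\gamma}_k>0$ at infinity; hence $\kappa_\e:=\inf_{|v|\geq\frac{1+\e}{2}R} g(v)>0$. Taking $\eta\leq\kappa_\e$ gives $|l-\barl|\leq\eta(1+|v|^{m_0})\leq\barl$ on that set, so $l\geq0$. Conditions (2) and (3) concern only the bounded sets $\{|v|<\frac{1+\e}{2}R\}$ and $B_{2r/(1+\e)}\subseteq B_{2r}$, where $|l-\barl|\leq\eta\,C'$ pointwise; since the required bounds $-\frac{1+\e}{2}\delta_1$ and $-\frac{2}{1+\e}\delta_2$ leave strictly positive slack (of sizes $\frac{\e}{2}\delta_1$ and $2\delta_2(1-\frac1{1+\e})$) against the reference bounds $-\delta_1/2$, $0$, and $-2\delta_2$ from the first part, choosing $\eta$, hence $\Delta$, small enough closes both estimates.

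The only step that is not a routine compactness argument on a bounded region is controlling $l$ at infinity in condition (1) from an $L^2$ bound posed on the bounded ball $B_R$, and I expect this to be the main obstacle. It is precisely here that the finite dimensionality of $\E$ and the positivity of the leading coefficient $\bar{\gamma}_k$ are essential: the former converts $L^2(B_R)$-smallness into smallness in the weighted norm $\|\cdot\|_*$, and the latter guarantees that the buffer annulus $R/2<|v|<\frac{1+\e}{2}R$ raises $\barl$ to a level that no small perturbation can cross.
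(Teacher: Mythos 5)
Your proof is correct. There is no argument in the paper to compare it against: the author simply declares that the lemma ``is easily verified'' and omits the proof, so your write-up supplies the missing details, and by what is surely the intended route. The two mechanisms you isolate are the right ones: on the finite-dimensional space $\E$ the quantity $\|p\|_{L^2(B_R)}$ is a genuine norm (a polynomial vanishing on an open ball vanishes identically), hence equivalent to the weighted sup-norm $\sup_v|p(v)|/(1+|v|^{m_0})$, and this is what converts the $L^2(B_R)$ hypothesis into pointwise control on all of $\mathbb{R}^3$, including the unbounded region where condition (1) of property $P$ must be checked; the strict gaps between the parameters $(R/2,\delta_1/2,2r,2\delta_2)$ and $\left(\frac{1+\e}{2}R,\frac{1+\e}{2}\delta_1,\frac{2}{1+\e}r,\frac{2}{1+\e}\delta_2\right)$ then leave positive slack that absorbs the perturbation. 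Your accounting of the slack ($\frac{\e}{2}\delta_1$ and $\frac{2\e}{1+\e}\delta_2$) and of the buffer annulus $R/2<|v|<\frac{1+\e}{2}R$ for condition (1) is accurate.

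One point is worth making explicit. Your Part 1 invokes the hypothesis that the non-constant basis polynomials vanish at the origin, so that $\barl(0)=\bar{\gamma}_0<0$. The paper never states this; it assumes only linear independence, $l_1=1$, $l_k=|v|^{m_0}$, and the sign conditions on $\bar{\gamma}_0,\bar{\gamma}_k$. Under those literal hypotheses the lemma is false: for the basis $\{1,\,v_1+5,\,|v|^2\}$ with coefficients $(-1,1,1)$ one gets $\barl=|v|^2+v_1+4\geq 15/4>0$ everywhere, so condition (3) of property $P$ fails for every choice of $(r,\delta_2)$ despite the constant coefficient being negative and the leading one positive. So the assumption you added is not cosmetic; it is exactly what makes Part 1 true, and it is satisfied by the standard monomial moment bases the paper clearly has in mind (it is also what makes the hypothesis $\bar{\gamma}_0<0$ meaningful at all). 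The same remark applies to reading ``the highest degree polynomial is $l_k=|v|^{m_0}$'' as saying that all other $l_i$ have strictly lower degree, which both you and the paper need in order to conclude $\barl(v)/(1+|v|^{m_0})\to\bar{\gamma}_k>0$ at infinity.
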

Let $\Delta_2$ be a number corresponding to $\e=1,$ and $\Delta_1<\Delta_2$ be the number corresponding to $\e{}={}1/2,$ in the above lemma.
The numbers $(R,\delta_1,r,\delta_2)$ and the corresponding balls $B_R,$ $B_r$ from the definition of the property $P,$ will be fixed in the following analysis.

Let the initial date $l^0(x,v)$ be such that
\begin{equation}
\label{cond:initial}
\begin{cases}
\forall x\in\mathbb{R}^3, \quad l^0(x,\cdot)\in \E,\\
l^0-\barl\in L^2(B_R; H^3(\mathbb{R}^3)), \\
\sup_x\int_{B_R}|l^0(x,v)-\barl(v)|^2\,dv\leq \Delta_1.
\end{cases}
\end{equation}
We use a weighted ``norm" for $l,$ defined as:
\begin{equation}
\label{X}
\|l\|_{X(l)}{}={}\left(\iint (\sum_{\alpha,|\alpha|\leq 3}|D^\alpha_x l|^2)w(l)\,dvdx\right)^{1/2}.
\end{equation}

\subsection{Statement of the result}
The discrete-time algorithm approximating differential inclusion \eqref{model:1} is defined in the following way.
Let $h>0$ be the time step and define $N{}={}\lceil T/h\rceil.$ Given the values of $l^{n-1},$ we define 
\begin{equation}
\label{Projection}
l^n(x,\cdot){}={}\proj_{\E(l^{n-1}(x,\cdot))}l^{n-1}(x-h\cdot,\cdot),
\end{equation}
where the projection is in weighted $L^2$ space with weight $w(l^{n-1}).$ If the weight is not zero identically, the projection is uniquely defined by conditions
\begin{equation}
\label{Projection_cond}
\forall x\in\mathbb{R}^3,\,i=1..k,\quad 
\int l^n(x,v)l_i(v)w(l^{n-1}(x,v))\,dv{}={}
\int l^{n-1}(x-hv,v)l_i(v)w(l^{n-1}(x,v))\,dv.
\end{equation}
In what follows we use the shorthand notation $w^n(x,v){}={}w(l^n(x,v)).$

\begin{theorem}
\label{theorem:1}
Let $\barl,l^0$ be as described above and $h\in(0,1].$ There is time $T>0,$ independent of $h,$ such that all functions $l^n,$ $n=0..\lceil T/h\rceil,$ in \eqref{Projection} are well defined, and  the family $\{l^h\}$ of interpolations of $l^n$'s, defined in \eqref{approx:continuous:1}, converges as $h\to0$, uniformly on compact set in $\mathbb{R}^3\times[0,T]\times\mathbb{R}^3$ to a unique classical solution of \eqref{model:1}.
\end{theorem}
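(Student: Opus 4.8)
The plan is to establish uniform (in $h$) a priori bounds on the discrete iterates $l^n$ in the weighted norm $\|\cdot\|_{X(l)}$ of \eqref{X}, then pass to the limit by compactness. The scheme \eqref{Projection} is a composition of two operations: free transport $l^{n-1}(x,v)\mapsto l^{n-1}(x-hv,v)$, which is an isometry in the $x$--variable and merely shifts the argument, and an $L^2_{w}$--orthogonal projection onto the finite-dimensional space $\E$ in the $v$--variable. Since projection is nonexpansive in the relevant weighted inner product, the essential difficulty is controlling the error introduced because the weight $w(l^{n-1})$ changes from step to step and because transport couples $x$ and $v$.

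\medskip

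\noindent\emph{Step 1: A single-step energy estimate.} First I would derive, for each multi-index $\alpha$ with $|\alpha|\leq 3$, an inequality of the form
\begin{equation*}
\iint |D^\alpha_x l^n|^2\, w^{n-1}\,dv\,dx
\leq
\iint |D^\alpha_x\big(l^{n-1}(x-hv,v)\big)|^2\, w^{n-1}\,dv\,dx,
\end{equation*}
which follows because $l^n(x,\cdot)$ is the $w^{n-1}$--projection of the transported density and $D^\alpha_x$ commutes with both the projection (the weight depends on $x$ only through $l^{n-1}$, so derivatives of the projection operator contribute lower-order terms that must be absorbed) and with the transport shift. The right-hand side is then compared to $\iint |D^\alpha_x l^{n-1}|^2 w^{n-1}\,dv\,dx$ by changing variables $x\mapsto x+hv$ in the integral; this change of variables is exact for the top-order term but generates commutator terms from the $v$--dependence of the shift when $|\alpha|\geq 1$, since $D^\alpha_x$ of $l^{n-1}(x-hv,v)$ involves only $x$--derivatives and the shift is linear, so in fact the transport step is clean and the only genuine error is the weight mismatch between $w^{n}$ and $w^{n-1}$.

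\medskip

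\noindent\emph{Step 2: Controlling the weight drift.} The crux is replacing $w^{n-1}$ by $w^n$ in the energy, so that the bound closes as a recursion on $\|l^n\|_{X(l^n)}$. Because $w\in C^4$ and $l^{n}-l^{n-1}=O(h)$ (the transport displaces the argument by $hv$ and the projection is order-$h$ close to the identity on $\E$, as the heuristic preceding \eqref{eq:Misha} indicates), one expects $|w^n-w^{n-1}|\leq C h\,|v|\,\|\grad_x l^{n-1}\|$ on the support, giving a multiplicative factor $(1+Ch)$ per step. Iterating over $N=\lceil T/h\rceil$ steps yields $\|l^n\|_{X(l^n)}\leq e^{CT}\|l^0\|_{X(l^0)}$, uniform in $h$. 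Here Lemma~\ref{lemma:nondegenerate} is essential: the bound must be shown to keep $\sup_x\int_{B_R}|l^n-\barl|^2\,dv\leq\Delta_2$ so that $w^n$ stays bounded below on $B_r$ and the projection \eqref{Projection_cond} remains well defined and uniformly nondegenerate; this is a bootstrap, and choosing $T$ small (depending only on the constants from the energy estimate and on $\Delta_1,\Delta_2$) is what makes $T$ independent of $h$. Sobolev embedding $H^3(\mathbb{R}^3)\hookrightarrow C^1$ then transfers the $X$--bound into uniform Lipschitz bounds in $x$ and, through the transport structure, in $t$ and $v$ as well.

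\medskip

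\noindent\emph{Step 3: Compactness and passage to the limit.} With uniform Lipschitz bounds in $(x,t,v)$ for the interpolants $l^h$ defined in \eqref{approx:continuous:1}, the Arzel\`a--Ascoli theorem yields a subsequence converging uniformly on compact sets to a limit $l$. One verifies that $l(x,t,\cdot)\in\E$ for all $(x,t)$ (a closed condition preserved under uniform limits), that the initial datum is attained, and that the moment identities $\int(\partial_t l+v\cdot\grad_x l)l_i\,dv=0$ hold in the limit by rewriting \eqref{Projection_cond} as a discrete difference quotient and letting $h\to 0$; the weight factors $w^{n-1}$ in \eqref{Projection_cond} converge to $w(l)$ by continuity, and the transport difference quotient $h^{-1}(l(x,v)-l(x-hv,v))$ converges to $v\cdot\grad_x l$. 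Uniqueness of the classical solution, and hence convergence of the full family rather than a subsequence, follows from the symmetrizable hyperbolic structure of the equivalent system \eqref{eq:Hyperbolic:2} via the standard energy/Gronwall argument for the difference of two solutions.

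\medskip

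\noindent The step I expect to be the main obstacle is Step~2: quantifying the weight drift $w^n-w^{n-1}$ at the level of third $x$--derivatives while simultaneously keeping the iterates inside the nondegeneracy region of Lemma~\ref{lemma:nondegenerate}. The projection operator $\proj_{\E(l^{n-1})}$ depends on $x$ through the weight, so differentiating \eqref{Projection_cond} in $x$ up to third order produces terms in which $D^\alpha_x w^{n-1}$ is paired against $l^n$; controlling these requires that $w(l)=\bar c_p(l_-)^{(2-p)/(p-1)}$ and its derivatives be bounded on the support, which is exactly why the restriction $p\in(1,6/5)$ (making $w\in C^4$ and the exponent large enough) and the compact-support, bounded-below structure of the densities are invoked. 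Making the commutator estimates close \emph{without} losing a power of $h$ — so that the per-step factor is $1+O(h)$ and not $1+O(1)$ — is the technical heart of the argument.
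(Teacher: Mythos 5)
Your proposal is correct and follows essentially the same route as the paper: a per-step energy estimate in the weighted $H^3$ norm $\|\cdot\|_{X}$ with $O(h)$ errors coming from weight drift and from the $x$--dependence of the projection, a bootstrap on the nondegeneracy property of Lemma \ref{lemma:nondegenerate} closed by taking $T$ small (the paper's condition $CT<\Delta_2-\Delta_1$), then Lipschitz bounds, compactness, passage to the limit in the moment identities, and uniqueness of classical solutions to upgrade subsequential convergence to convergence of the full family. The step you single out as the technical heart --- closing the third-order commutator estimates with a per-step factor $1+O(h)$ rather than $1+O(1)$ --- is exactly what the paper's Lemma \ref{lemma:energy:3} carries out.
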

\begin{remark}
The uniqueness of classical solutions (in fact strong-weak uniqueness) follows from the uniqueness of classical solutions of corresponding PDEs \eqref{eq:Hyperbolic:2}, with strictly convex entropy, see, for example, \cite{Dafermos}, theorem 5.3.1.
\end{remark}

The theorem is based on the fact that functions $l^n$ are bounded in strong norms, which can be heuristically explained as follows. Suppose that the approximation $\{l^n\}$ is well-defined. Denote by $\hatl^{n-1}(x,v){}={}l^{n-1}(x-hv,v),$ and for any $x\in\mathbb{R}^3,$ the distance
\[
|l(x,\cdot)-\barl(\cdot)|^2_{w^{n}}{}={}\int |l(x,v)-\barl(v)|^2w(l^n(x,v))\,dv.
\]
From \eqref{Projection} we obtain
\[
|l^n-\barl|^2_{w^{n-1}}\leq |\hatl^{n-1}-\barl|^2_{w^{n-1}}.
\]
We will show that
\begin{equation}
\label{est:zero}
|l^n-\barl|^2_{w^n}{}\leq{}|\hatl^{n-1}-\barl|^2_{w(\hatl^{n-1})}{}+{}R_n,
\end{equation}
where $R_n$ accounts for changes in the weights from $w^n$ to $w^{n-1},$ and from  $w(l^{n-1})$ to $w(\hatl^{n-1}).$ The remainder is such that 
\begin{equation}
\label{R1}
\int R_n\,dx{}={}O(h),
\end{equation}
provided that all $l^n$ are smooth, as measured by \eqref{X}. Integrating \eqref{est:zero} we obtain
\begin{equation}
\label{est:one}
\int|l^n-\barl|^2_{w^n}\,dx{}\leq{}\int|l^{n-1}-\barl|^2_{w^{n-1}}\,dx{}+{}O(h).
\end{equation}

To estimate the spacial derivatives we use an orthogonal decomposition (in topology of $L^2_{w^{n-1}}(\mathbb{R}^3)$)
\[
(l^n-\barl){}+{}(\hatl^{n-1}-l^n){}={}\hatl^{n-1}-\barl,
\]
to obtain 
\[
D^\alpha_x(l^n-\barl){}+{}D^\alpha_x(\hatl^{n-1}-l^n){}={}D^\alpha_x(\hatl^{n-1}-\barl),
\]
which implies that
\[
|D^\alpha_x(l^n-\barl)|^2_{w^{n-1}}{}\leq{}|D^\alpha_x(\hatl^{n-1}-\barl)|^2_{w^{n-1}}{}+{}R^\alpha_n,
\]
where $R^\alpha_n$ accounts for spacial derivatives of the weight function, and has property \eqref{R1}. From this, by changing the weights, 
\[
|D^\alpha_x(l^n-\barl)|^2_{w^{n}}{}\leq{}|D^\alpha_x(\hatl^{n-1}-\barl)|^2_{w(\hatl^{n-1})}{}+{}\tilde{R}^\alpha_n,
\]
which leads to 
\begin{equation}
\label{est:two}
\int|D^\alpha_x(l^n-\barl)|^2_{w^n}\,dx{}\leq{}\int|D^\alpha_x(l^{n-1}-\barl)|^2_{w^{n-1}}\,dx{}+{}O(h).
\end{equation}
With derivatives of order 3, \eqref{est:one}, \eqref{est:two} lead to a priori estimates on $l^n$ as measured by $X(l^n).$ 

The above arguments are formalized in lemmas \ref{lemma:1}--\ref{lemma:Final} below, after which we show that properly interpolated in time sequence $l^n$ converges to a classical solution of \eqref{model:1}.


\subsection{Proof of theorem \ref{theorem:1}}
Consider the sequence $\{l^n\}$ determined from the initial data and \eqref{Projection}. We will assume in this section the following hypothesis.
\begin{hypothesis} For all $n=1..N,$ and all $x\in\mathbb{R}^3,$
\[
l^n(x,\cdot)\in P(R,\delta_1,r,\delta_2).
\]
\end{hypothesis}
Under this hypotheses, functions $l^n=\sum_i\gamma_i^n(x)l_i(v)$ from \eqref{Projection} are well-defined and we proceed to derive energy estimates.

\begin{lemma}
\label{lemma:1}
There is $C>0,$ independent of $(n,h),$ such that
\begin{equation}
\label{est:1}
\sup_x \int_{B_R}|l^{n-1}(x-hv,v)-l^{n-1}(x,v)|^2\,dv\leq Ch^2\|l^{n-1}-\barl\|^2_{X(l^{n-1})}.
\end{equation}
\end{lemma}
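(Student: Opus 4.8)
The plan is to bound the difference quotient $l^{n-1}(x-hv,v)-l^{n-1}(x,v)$ by a line integral of the gradient and then control the resulting expression by the weighted $X$-norm. First I would write, for fixed $v$ and $x$,
\[
l^{n-1}(x-hv,v)-l^{n-1}(x,v){}={}-\int_0^1 hv\cdot\grad_x l^{n-1}(x-\tau hv,v)\,d\tau,
\]
so that by the Cauchy--Schwarz inequality in $\tau$,
\[
|l^{n-1}(x-hv,v)-l^{n-1}(x,v)|^2\leq h^2|v|^2\int_0^1 |\grad_x l^{n-1}(x-\tau hv,v)|^2\,d\tau.
\]
Integrating over $v\in B_R$ and taking the supremum in $x$, the factor $|v|^2$ is bounded by $R^2$ on $B_R$, which removes the velocity weight and leaves a double integral of $|\grad_x l^{n-1}|^2$ over $B_R$ and over the segment of shifted base points.

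The main obstacle is reconciling the \emph{unweighted} left-hand side with the \emph{weighted} $X(l^{n-1})$-norm on the right, since $\|l^{n-1}-\barl\|^2_{X(l^{n-1})}$ carries the factor $w(l^{n-1})$ while the statement integrates only over $B_R$ with no weight. Here I would invoke Hypothesis 1: because $l^{n-1}(x,\cdot)\in P(R,\delta_1,r,\delta_2)$, the weight $w(l^{n-1}){}={}\bar{c}_p(l^{n-1}_-)^{(2-p)/(p-1)}$ is strictly bounded below on the ball $B_r$ where $l^{n-1}\leq-\delta_2$, and more to the point the integrand on the left lives on $B_R$ where the shifted argument $x-\tau hv$ ranges over a compact $v$-slice. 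The cleanest route is to note that $l^{n-1}-\barl$ and its $x$-derivatives, restricted to $B_R$, are controlled by the weighted norm because $w$ is bounded below by a positive constant on a fixed subregion and $\grad_x\barl{}={}0$ (constant reference state), so that $\grad_x l^{n-1}{}={}\grad_x(l^{n-1}-\barl)$ and one can absorb the weight into the constant $C$ after a Fubini exchange that turns the $\tau$-integral of the translated gradient into an integral over $x$.

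The remaining steps are routine: after the Fubini exchange I would substitute $y{}={}x-\tau hv$ (with Jacobian one in $x$), use translation invariance of the $dx$-integral to replace the supremum over shifted base points by the full $x$-integral of $|\grad_x(l^{n-1}-\barl)|^2$, and then reinsert the weight $w(l^{n-1})$ by comparison on the region where it is bounded below, yielding the bound $Ch^2\|l^{n-1}-\barl\|^2_{X(l^{n-1})}$ with the $|v|^2\leq R^2$ and the lower bound on $w$ collected into the constant $C$. The constant $C$ depends only on $R$, $r$, $\delta_2$, and $p$ through the fixed geometry of the property $P$, hence is independent of $(n,h)$ as required. I expect the weight-comparison step to require a short lemma or explicit estimate showing $w(l^{n-1})\geq \bar c_p\delta_2^{(2-p)/(p-1)}$ on $B_r$, which is where the compact-support and positivity structure of $E$ is genuinely used.
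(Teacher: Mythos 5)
Your line-integral representation, the Cauchy--Schwarz step producing the factor $h^2|v|^2\leq h^2R^2$, and your handling of the weight (the lower bound $w(l^{n-1})\geq\bar{c}_p\delta_2^{(2-p)/(p-1)}$ on $B_r$ from Hypothesis 1, together with equivalence of the $L^2(B_R)$ and $L^2(B_r)$ norms on the finite-dimensional space $\E$) are all sound, and the weight comparison is indeed what the paper uses implicitly. The gap is the step where you ``use translation invariance of the $dx$-integral to replace the supremum over shifted base points by the full $x$-integral.'' The left-hand side of \eqref{est:1} is a \emph{supremum} in $x$; there is no $dx$-integral present, so Fubini, the substitution $y=x-\tau hv,$ and translation invariance have nothing to act on. A supremum is not dominated by an integral: if $|\grad_x l^{n-1}(\cdot,v)|$ were of size $M$ on an $x$-set of diameter $\e$ and zero elsewhere, then for fixed $h$ the quantity $\sup_x\int_{B_R}\int_0^1|\grad_x l^{n-1}(x-\tau hv,v)|^2\,d\tau dv$ is of order $M^2\e/h,$ while the full $x$-integral $\iint_{B_R}|\grad_x l^{n-1}|^2\,dvdx$ is of order $M^2\e^3,$ so no constant independent of the function closes this step. (The alternative of substituting $y=x-\tau hv$ in the $v$-integral also fails, since the Jacobian $(\tau h)^{-3}$ is not integrable in $\tau.$) What your mechanism does prove is the $x$-integrated companion estimate \eqref{eneqry:00}, which is exactly the content of lemma \ref{Sobolev:2}; it does not give the uniform-in-$x$ bound asserted in lemma \ref{lemma:1}.

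The missing ingredient is the Sobolev embedding $H^3(\mathbb{R}^3)\hookrightarrow W^{1,\infty}(\mathbb{R}^3),$ i.e. lemma \ref{Sobolev:1} of the paper; such a spike as above is precisely what a bounded $H^3$ norm forbids, so the embedding cannot be bypassed by measure-theoretic manipulation. Since $l^{n-1}(\cdot,v)-\barl(v)\in H^3(\mathbb{R}^3)$ for each fixed $v$ and $\barl$ is constant in $x,$ you have
\[
\esssup_y|\grad_x l^{n-1}(y,v)|\leq C\|l^{n-1}(\cdot,v)-\barl(v)\|_{H^3(\mathbb{R}^3)},
\]
and inserting this pointwise bound inside your $\tau$-integral makes the estimate uniform in $x$:
\[
\int_{B_R}|l^{n-1}(x-hv,v)-l^{n-1}(x,v)|^2\,dv\leq Ch^2R^2\int_{B_R}\|l^{n-1}(\cdot,v)-\barl(v)\|^2_{H^3(\mathbb{R}^3)}\,dv
\qquad\mbox{for every } x.
\]
From here your weight-comparison argument (Hypothesis 1 plus norm equivalence on $\E$) finishes the proof; this combination of the embedding with the translation estimate is precisely the paper's argument via lemmas \ref{Sobolev:1} and \ref{Sobolev:2}.
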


\begin{proof} The estimate follows directly by applying lemma \ref{Sobolev:1} and  lemma \ref{Sobolev:2} from the Appendix to function $l^{n-1}.$
\end{proof}

\begin{lemma}
There is $C>0,$ independent of $(n,h),$ such that
\begin{equation}
\label{est:2}
\sup_x\int_{B_R}|l^{n}(x,v)-l^{n-1}(x,v)|^2\,dv\leq Ch^2\|l^{n-1}-\barl\|^2_{X(l^{n-1})},
\end{equation}
and 
\begin{equation}
\label{eneqry:00}
\int \int_{B_R} |l^n-l^{n-1}|^2\,dvdx\leq Ch^2\|l^{n-1}-\barl\|^2_{X(l^{n-1})}.
\end{equation}
\end{lemma}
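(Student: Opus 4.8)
The plan is to control $l^n-l^{n-1}$ by inserting the transported state $\hatl^{n-1}(x,v)=l^{n-1}(x-hv,v)$ and splitting
\[
l^n-l^{n-1}=(l^n-\hatl^{n-1})+(\hatl^{n-1}-l^{n-1}),
\]
so that the projection error and the transport error are handled separately. The transport error is exactly what Lemma \ref{lemma:1} controls. For the projection error, I would use that, by \eqref{Projection}, $l^n(x,\cdot)$ is the orthogonal projection of $\hatl^{n-1}(x,\cdot)$ onto $\E$ in the weighted space with weight $w^{n-1}$; since $l^{n-1}(x,\cdot)\in\E$ is one admissible competitor, the minimizing property of the projection gives, for each $x$,
\[
\int|l^n-\hatl^{n-1}|^2 w^{n-1}\,dv\leq\int|l^{n-1}-\hatl^{n-1}|^2 w^{n-1}\,dv,
\]
whence, by the triangle inequality in the $w^{n-1}$-inner product,
\[
\int|l^n-l^{n-1}|^2 w^{n-1}\,dv\leq 4\int|\hatl^{n-1}-l^{n-1}|^2 w^{n-1}\,dv.
\]

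The crux is to pass between the weighted norm, in which the projection contracts, and the unweighted $L^2(B_R)$ norm, in which both the statement and Lemma \ref{lemma:1} are phrased. Under the standing Hypothesis, $l^{n-1}(x,\cdot)$ has property $P(R,\delta_1,r,\delta_2)$, so the weight $w^{n-1}=\bar{c}_p\big((l^{n-1})_-\big)^{(2-p)/(p-1)}$ vanishes for $|v|>R$, is bounded above by $\bar{c}_p\delta_1^{(2-p)/(p-1)}$ on $B_R$, and is bounded below by $\bar{c}_p\delta_2^{(2-p)/(p-1)}>0$ on $B_r$ (the exponent is positive for $p\in(1,6/5)$). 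The upper bound immediately gives
\[
\int|\hatl^{n-1}-l^{n-1}|^2 w^{n-1}\,dv\leq \bar{c}_p\delta_1^{(2-p)/(p-1)}\int_{B_R}|\hatl^{n-1}-l^{n-1}|^2\,dv.
\]
For the reverse passage I would use that $l^n-l^{n-1}\in\E$ is a polynomial and that $\E$ is finite dimensional: since the $l_i$ are linearly independent on the open ball $B_r$, the quadratic forms $q\mapsto\int_{B_r}q^2\,dv$ and $q\mapsto\int_{B_R}q^2\,dv$ are equivalent norms on $\E$ with constants depending only on $\{l_i\},r,R$. Combining this equivalence with the lower bound of the weight on $B_r$ yields, for $q=l^n-l^{n-1}\in\E$,
\[
\int_{B_R}|l^n-l^{n-1}|^2\,dv\leq C\int_{B_r}|l^n-l^{n-1}|^2\,dv\leq C'\int|l^n-l^{n-1}|^2 w^{n-1}\,dv,
\]
with $C,C'$ independent of $(n,h)$.

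Chaining these inequalities reduces both assertions to transport estimates for $\hatl^{n-1}-l^{n-1}$. Estimate \eqref{est:2} then follows by taking $\sup_x$ and invoking \eqref{est:1}. For \eqref{eneqry:00} one cannot simply integrate the $\sup_x$ bound over the unbounded $x$-domain; instead I would prove the integrated transport bound directly, writing $l^{n-1}(x-hv,v)-l^{n-1}(x,v)=-h\int_0^1 v\cdot\grad_x l^{n-1}(x-\theta hv,v)\,d\theta$, so that by Cauchy--Schwarz and $|v|\leq R$ on $B_R$,
\[
\int\!\!\int_{B_R}|\hatl^{n-1}-l^{n-1}|^2\,dv\,dx\leq h^2R^2\int_{B_R}\!\int_0^1\!\!\int|\grad_x l^{n-1}(x-\theta hv,v)|^2\,dx\,d\theta\,dv.
\]
Translation invariance of the inner $x$-integral removes the shift, and the finite-dimensional weight equivalence (now applied componentwise to $\grad_x l^{n-1}\in\E$) bounds the unweighted $B_R$-integral of $|\grad_x l^{n-1}|^2$ by its $w^{n-1}$-weighted integral, which is one of the $|\alpha|=1$ terms of $\|l^{n-1}-\barl\|^2_{X(l^{n-1})}$ (using $\grad_x\barl=0$); this yields \eqref{eneqry:00}. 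The main obstacle is precisely this weighted/unweighted transfer: it is what forces the joint use of property $P$ (to keep the weight nondegenerate on $B_r$ and bounded on $B_R$) and the finite-dimensionality of $\E$, and it is where all the $(n,h)$-uniform constants are generated.
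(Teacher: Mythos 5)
Your proof is correct and follows essentially the same route as the paper's: non-expansiveness of the weighted projection (the paper uses linearity of the projection together with $l^{n-1}\in\E$, you use the minimizing property plus the triangle inequality), the upper bound and vanishing of the weight on and outside $B_R$, the lower bound on $B_r$ with the finite-dimensional equivalence of norms on $\E$, and the transport estimate. If anything, your treatment of \eqref{eneqry:00} is more careful than the paper's, which cites the sup-in-$x$ bound \eqref{est:1} where the integrated translation estimate of Lemma \ref{Sobolev:2} (your explicit fundamental-theorem-of-calculus argument) is what is actually required, since the sup bound cannot be integrated over the unbounded $x$-domain.
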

\begin{proof}
Indeed, using \eqref{Projection} we obtain
\[
\sup_x\int|l^{n}(x,v)-l^{n-1}(x,v)|^2w^{n-1}\,dv\leq C\sup_x\int_{B_R}|l^{n-1}(x-hv,v)-l^{n-1}(x,v)|^2\,dv,
\]
and the first statement of the lemma follows from \eqref{est:1}, and the facts that $w^{n-1}$ is strictly positive on the ball $B_r,$ and norms with respect to balls $B_r$ and $B_R$ are equivalent.
Similarly, from conditions \eqref{Projection} and Hypothesis 1 we get
\[
\iint |l^n-l^{n-1}|^2w^{n-1}\,dvdx\leq C\int_{B_R}\int |l^{n-1}(x-hv,v)-l^{n-1}(x,v)|^2\,dxdv
\]
which is less than
$
Ch^2\|l^{n-1}-\barl\|^2_{X(l^{n-1})},
$
by estimate \ref{est:1}.
\end{proof}

\begin{lemma}
\label{lemma:3}
 There is $C>0,$ independent of $(n,h),$ such that
\[
\int\int_{B_R} |D^\beta l^n-D^\beta l^{n-1}|^2\,dxdv{}\leq{}Ch^2(\|l^{n-1}-\barl\|^2_{X(l^{n-1})}{}+{}
\|l^{n-1}-\barl\|^4_{X(l^{n-1})}),
\]
for any multi-index $\beta,$ with $|\beta|=1;$
\[
\int\int_{B_R} |D^\beta l^n-D^\beta l^{n-1}|^2\,dxdv{}\leq{}Ch^2(\|l^{n-1}-\barl\|^2_{X(l^{n-1})}{}+{}
\|l^{n-1}-\barl\|^4_{X(l^{n-1})}{}+{}\|l^{n-1}-\barl\|^6_{X(l^{n-1})}),
\]
for any multi-index $\beta,$ with $|\beta|=2.$
\end{lemma}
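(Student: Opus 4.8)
The plan is to reduce the statement to an estimate on the coefficient functions and then differentiate the projection identity. Since both $l^n$ and $l^{n-1}$ take values in $\E$, write them in the fixed basis as $l^n=\sum_i\gamma_i^n(x)l_i(v)$ and $l^{n-1}=\sum_i\gamma_i^{n-1}(x)l_i(v)$, so that the polynomials $l_i$ carry no $x$-dependence and $D^\beta_x(l^n-l^{n-1})=\sum_i D^\beta_x(\gamma_i^n-\gamma_i^{n-1})\,l_i(v)$. Restricting $v$ to the fixed ball $B_R$ and using linear independence of $\{l_i\}$, the quantity $\int_{B_R}|D^\beta(l^n-l^{n-1})|^2\,dv$ is comparable to $|D^\beta(\gamma^n-\gamma^{n-1})(x)|^2$; integrating in $x$, the lemma reduces to bounding $\int_{\mathbb{R}^3}|D^\beta(\gamma^n-\gamma^{n-1})|^2\,dx$ by the stated right-hand sides.

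Next I would record the moment identity underlying the scheme. Since $l^{n-1}\in\E$, subtracting its weighted moments from the projection conditions \eqref{Projection_cond} gives, for $i=1..k$,
\[
\int (l^n-l^{n-1})\,l_i\,w^{n-1}\,dv{}={}\int (\hatl^{n-1}-l^{n-1})\,l_i\,w^{n-1}\,dv,
\]
with $\hatl^{n-1}(x,v)=l^{n-1}(x-hv,v)$. Applying $D^\beta_x$ and expanding both sides by the Leibniz rule, I would isolate the top-order term $\int D^\beta(l^n-l^{n-1})\,l_i\,w^{n-1}\,dv=\sum_j G^{n-1}_{ij}\,D^\beta(\gamma^n_j-\gamma^{n-1}_j)$, where $G^{n-1}_{ij}(x)=\int l_il_j\,w^{n-1}\,dv$ is the Gram matrix of the weighted inner product. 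Under Hypothesis 1 the weight is bounded below on $B_r$, so $G^{n-1}$ is uniformly positive definite and $(G^{n-1})^{-1}$ is bounded independently of $(n,h,x)$. Hence
\[
D^\beta(\gamma^n-\gamma^{n-1}){}={}(G^{n-1})^{-1}\big(\text{leading moment}+\text{lower-order corrections}\big),
\]
where the leading moment is $\int D^\beta(\hatl^{n-1}-l^{n-1})\,l_i\,w^{n-1}\,dv$ and the corrections collect all terms in which at least one derivative falls on the weight $w^{n-1}$.

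The estimates then proceed term by term. Because $D^\beta_x\hatl^{n-1}(x,v)=(D^\beta l^{n-1})(x-hv,v)$, the leading moment is a transport difference of $D^\beta l^{n-1}$; for $|\beta|\leq2$ the Taylor remainder brings in spatial derivatives of $l^{n-1}$ of order at most three, so, by the same transport-difference estimate as in Lemma \ref{lemma:1} (applied now to $D^\beta l^{n-1}$ and integrated in $x$ via lemmas \ref{Sobolev:1}, \ref{Sobolev:2}), its $L^2_x$ norm is $O(h)\|l^{n-1}-\barl\|_{X(l^{n-1})}$, contributing the quadratic term $h^2\|l^{n-1}-\barl\|^2_{X(l^{n-1})}$. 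For the corrections I would expand $D^\mu w^{n-1}$ through the chain rule into $w^{(j)}(l^{n-1})$ times products of spatial derivatives of $l^{n-1}$, and pair them with the lower-order differences $D^{\beta-\mu}(\hatl^{n-1}-l^{n-1})$ and $D^{\beta-\mu}(l^n-l^{n-1})$; the latter are controlled by \eqref{est:2}--\eqref{eneqry:00}, and for $|\beta|=2$ also by the $|\beta|=1$ case established first, giving the argument an inductive structure. Each correction is a product of a transport difference (of size $O(h)$) with one extra gradient factor when $|\beta|=1$, and with two extra gradient factors when $|\beta|=2$, producing the quartic and sextic powers respectively. Throughout, the compact support of $w$ (a consequence of the choice $s(f)=f^p$, $p\in(1,6/5)$) bounds all the polynomial moment factors $l_i$ and $v\cdot l_i$ appearing in the transport remainders, so no growth in $v$ survives.

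The main obstacle is the bookkeeping for the correction terms: converting the spatial $L^2_x$ integrals of products of two (for $|\beta|=1$) or three (for $|\beta|=2$) derivative factors into powers of the single norm $\|l^{n-1}-\barl\|_{X(l^{n-1})}$ requires Gagliardo--Nirenberg/Sobolev product estimates on $\mathbb{R}^3$ (using $H^3\hookrightarrow L^\infty\cap W^{1,4}$), while taking care that no factor exceeds third order so that everything remains inside $X$. Verifying the uniform positive-definiteness of $G^{n-1}$, and hence the uniform bound on its inverse under Hypothesis 1, is the other point that must be checked cleanly; once these are in place, collecting the contributions yields the two stated inequalities.
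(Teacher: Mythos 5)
Your proposal is correct and follows essentially the same route as the paper: differentiate the projection identity \eqref{Projection_cond}, control the top-order term through the uniformly positive definite weighted Gram matrix (which the paper leaves implicit in its step ``It follows that for any $x$\dots'', justified by Hypothesis 1 and equivalence of norms on $\E$), bound the leading transport difference via \eqref{est:1} and the Sobolev lemmas, and absorb the weight-derivative corrections using \eqref{est:2} and product estimates, with the $|\beta|=2$ case obtained by differentiating once more. Your explicit Gram-matrix formulation and the inductive structure for $|\beta|=2$ are only presentational refinements of the paper's argument, not a different method.
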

\begin{proof} Let $D$ be generic notation for the first derivative in $x.$ By applying it to \eqref{Projection} we find that
\begin{eqnarray}
\label{diff:1}
\int(Dl^n - Dl^{n-1})l_iw^{n-1}\,dv &=& \int (Dl^{n-1}(x-hv,v)-Dl^{n-1}(x,v))l_iw^{n-1}\,dv\\
&&+\int (l^{n-1}(x-hv,v)-l^n(x,v))l_iw'(l^{n-1})Dl^{n-1}\,dv. \notag
\end{eqnarray}
It follows that for any $x,$
\begin{eqnarray*}
\int_{B_R}|Dl^n- Dl^{n-1}|^2\,dv{}&\leq&{} C\int_{B_R}|Dl^{n-1}(x-hv,v)-Dl^{n-1}(x,v)|^2\,dv\\
&&{}+{}
C\int_{B_R}|l^{n-1}(x-hv,v)-l^{n-1}(x,v)|^2|Dl^{n-1}|^2\,dv\\
&&{}+{}\int_{B_R}|l^n-l^{n-1}|^2|Dl^{n-1}|^2\,dv\\
&\leq& C\int_{B_R}|Dl^{n-1}(x-hv,v)-Dl^{n-1}(x,v)|^2\,dv\\
&&{}+{}\int_{B_R}|Dl^{n-1}|^2\,dv \int_{B_R}|l^{n-1}(x-hv,v)-l^{n-1}(x,v)|^2\,dv\\
&&{}+{}\int_{B_R}|Dl^{n-1}|^2\,dv \int_{B_R}|l^{n}-l^{n-1}|^2\,dv.
\end{eqnarray*}
Using \eqref{est:1}, \eqref{est:2} and lemma \ref{Sobolev:2} we get the first inequality in the lemma.
\[
\int\int_{B_R}|Dl^n- Dl^{n-1}|^2\,dv{}\leq{} Ch^2( \|l^{n-1}-\barl\|^2_{X(l^{n-1})} + \|l^{n-1}-\barl\|^4_{X(l^{n-1})}).
\]
The second inequality is obtained by differentiating \eqref{diff:1} and repeating the arguments above.
\end{proof}

\begin{lemma}[Zero order entropy estimate]
There is $C>0,$ independent of $(n,h),$ such that 
\begin{equation}
\label{energy:0}
\iint |l^n-\barl|^2w^{n}\,dvdx{}\leq {} \iint |l^{n-1}-\barl|^2w^{n-1}\,dvdx+Ch\|l^{n-1}-\barl\|^2_{X(l^{n-1})}.
\end{equation}
\end{lemma}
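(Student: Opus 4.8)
The plan is to combine the variational (minimizing) property of the projection with a careful accounting of how the weight changes under transport and under projection, so that every weight change becomes an $O(h)$ remainder. At each fixed $x$ the map $\hatl^{n-1}(x,\cdot)\mapsto l^n(x,\cdot)$ is, by \eqref{Projection}, the orthogonal projection onto $\E$ in $L^2$ with weight $w^{n-1}$. Since $\barl\in\E$ and $l^n\in\E$, the error $\hatl^{n-1}-l^n$ is $w^{n-1}$-orthogonal to $l^n-\barl$, so Pythagoras gives, for every $x$,
\[
\int|l^n-\barl|^2w^{n-1}\,dv\;\leq\;\int|\hatl^{n-1}-\barl|^2w^{n-1}\,dv.
\]
I would then split both weights: on the left write $w^n=w^{n-1}+(w^n-w^{n-1})$, and on the right $w^{n-1}=w(\hatl^{n-1})+(w^{n-1}-w(\hatl^{n-1}))$. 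This produces exactly \eqref{est:zero}, with remainder $R_n(x)=\int|l^n-\barl|^2(w^n-w^{n-1})\,dv+\int|\hatl^{n-1}-\barl|^2(w^{n-1}-w(\hatl^{n-1}))\,dv$.

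The point of evaluating the weight at the transported argument $w(\hatl^{n-1})$ on the right is that it makes the transport step exact: for fixed $v$ the substitution $y=x-hv$ yields
\[
\iint|\hatl^{n-1}-\barl|^2w(\hatl^{n-1})\,dv\,dx\;=\;\iint|l^{n-1}-\barl|^2w^{n-1}\,dv\,dx .
\]
Thus, after integrating \eqref{est:zero} in $x$, the leading term is precisely the right-hand quantity of \eqref{energy:0}, and it remains only to show $\int R_n(x)\,dx=O\!\left(h\|l^{n-1}-\barl\|^2_{X(l^{n-1})}\right)$.

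For the remainders I would use that $w\in C^4$ is Lipschitz on the relevant range, so $|w^n-w^{n-1}|\leq C|l^n-l^{n-1}|$ and $|w^{n-1}-w(\hatl^{n-1})|\leq C|l^{n-1}(x,v)-l^{n-1}(x-hv,v)|$; moreover, by property $P$ the weight is supported in $B_R$, so every integral localizes there. Bounding $|l^n-\barl|$ and $|\hatl^{n-1}-\barl|$ in $L^\infty(B_R)$ by a fixed constant (finite-dimensionality of $\E$ together with the neighborhood bound of Hypothesis 1), I would apply Cauchy--Schwarz in $(x,v)$ so that the projection part of $\int R_n\,dx$ is controlled by
\[
C\Big(\iint_{B_R}|l^n-\barl|^2\,dv\,dx\Big)^{1/2}\Big(\iint_{B_R}|l^n-l^{n-1}|^2\,dv\,dx\Big)^{1/2},
\]
and treat the transport part identically with $l^{n-1}(x,v)-l^{n-1}(x-hv,v)$ in place of $l^n-l^{n-1}$. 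The first factor is dominated by $\|l^{n-1}-\barl\|_{X(l^{n-1})}$ after using the equivalence of the $L^2(B_r)$ and $L^2(B_R)$ norms on the finite-dimensional space $\E$ (so the unweighted $L^2(B_R)$ norm is bounded by the weighted $X$-norm) together with \eqref{eneqry:00} to replace $l^n$ by $l^{n-1}$; the second factor is $O\!\left(h\|l^{n-1}-\barl\|_{X(l^{n-1})}\right)$ by \eqref{est:1}, \eqref{eneqry:00} and the integrated translation bound. Multiplying gives the stated $Ch\|l^{n-1}-\barl\|^2_{X(l^{n-1})}$.

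The main obstacle is the bookkeeping needed to produce exactly the square of the norm with only a single power of $h$: one factor of $\|l^{n-1}-\barl\|_{X(l^{n-1})}$ must come from the $L^2(B_R)$ size of the amplitude $l^n-\barl$ and the other from the $O(h)$ smallness of the transport/projection displacement, while the $L^\infty$ factor must be absorbed as a harmless constant rather than as an extra norm power. This forces the particular Cauchy--Schwarz grouping above and relies essentially on two structural facts already available: the support of $w$ in $B_R$ (so no $v$-tail survives) and the equivalence of $L^2$ norms over balls for elements of the finite-dimensional space $\E$ (so unweighted control near the support converts into control by the weighted norm $X$).
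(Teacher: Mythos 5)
Your proof is correct and follows essentially the same route as the paper's: the Pythagoras/minimality property of the weighted projection \eqref{Projection}, the change of variables $y=x-hv$ that makes the transport step exact, and Lipschitz bounds on the weight differences, localized to $B_R$ by property $P$ and converted into an $O(h)$ factor via \eqref{est:1}, \eqref{eneqry:00} and the finite-dimensional equivalence of norms. If anything, your version is slightly more complete than the paper's displayed proof, which keeps $w^{n-1}$ on the left-hand side throughout and never explicitly estimates the term $\iint|l^n-\barl|^2\,|w^{n}-w^{n-1}|\,dvdx$ contained in your $R_n$; that term is needed to reach \eqref{energy:0} exactly as stated, and it is handled precisely by your Cauchy--Schwarz grouping with \eqref{est:2} and \eqref{eneqry:00}.
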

\begin{proof}
To get the estimate we use \eqref{Projection} to write
\begin{multline*}
\iint |l^n-\barl|^2w^{n-1}\,dvdx{}\leq{}\iint|l^{n-1}(x-hv,v)-\barl|^2w^{n-1}dvdx\\
{}\leq{}
\iint|l^{n-1}(x,v)-\barl|^2w^{n-1}\,dvdx{}+{}\iint|l^{n-1}(x,v)-\barl|^2|w^{n-1}-w(l^{n-1}(x+hv,v))|\,dvdx
\end{multline*}
The last term can be estimated as
\begin{multline*}
\iint|l^{n-1}(x,v)-\barl|^2|w^{n-1}-w(l^{n-1}(x+hv,v))|\,dvdx\\
{}\leq{} C\int\int_{B_R}|l^{n-1}(x,v)-\barl|^2|l^{n-1}(x,v)-l^{n-1}(x-hv,v)|\,dvdx\\
{}\leq{}C\int \sup_{v\in B_R} |l^{n-1}(x,v)-l^{n-1}(x-hv,v)|\int_{B_R}|l^{n-1}(x,v)-\barl|^2\,dv\,dx\\
{}\leq{}C \int \int_{B_R}|l^{n-1}(x,v)-l^{n-1}(x-hv,v)|\,dv\int_{B_R}|l^{n-1}(x,v)-\barl|^2\,dv\,dx\\
{}\leq{}C\left(\sup_x\int_{B_R}|l^{n-1}(x,v)-l^{n-1}(x-hv,v)|\,dv\right)\left(\int\int_{B_R}|l^{n-1}(x,v)-\barl|^2\,dvdx\right)\\
{}\leq{}
Ch\|l^{n-1}-\barl\|^2_{X(l^{n-1})},
\end{multline*}
where we used equivalence of norms in $v,$ and  \eqref{est:1}.
Using this in the previous inequality results in the statement of the lemma.

\end{proof}
Higher order energy estimates are obtained by differentiating conditions \eqref{Projection} and following the arguments of the previous lemma.
\begin{lemma}[Third order entropy estimates]
\label{lemma:energy:3}
There is $C>0,$ independent of $(n,h),$ such that for any multi-index $\alpha,$ with $|\alpha|=3,$
\begin{multline}
\label{energy:3}
\iint |D^\alpha l^n|^2w^n\,dxdv{}\leq{}\iint |D^\alpha l^{n-1}|^2w^{n-1}\,dxdv{}+{}Ch\left( \|l^n-\barl\|^3_{X(l^n)}{}+{}
\|l^n-\barl\|^2_{X(l^n)}\right.\\
\left.{}+{}\|l^{n-1}-\barl\|^2_{X(l^{n-1})}{}+{}\|l^{n-1}-\barl\|^3_{X(l^{n-1})}{}+{}\|l^{n-1}-\barl\|^5_{X(l^{n-1})}\right).
\end{multline}
\end{lemma}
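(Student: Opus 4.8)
The plan is to imitate the zero-order estimate \eqref{energy:0}, now differentiating the projection identity \eqref{Projection_cond} three times and tracking every term in which a spatial derivative falls on the weight. Fix $\alpha$ with $|\alpha|=3$ and set $\hatl^{n-1}(x,v)=l^{n-1}(x-hv,v)$. Since $\barl$ is constant in $x$, $D^\alpha l^n=D^\alpha(l^n-\barl)=\sum_iD^\alpha\gamma_i^n(x)\,l_i$ still lies in $\E$ for each $x$, and \eqref{Projection_cond} says that $l^n-\hatl^{n-1}$ is $w^{n-1}$-orthogonal to $\E$. Applying $D^\alpha$ and the Leibniz rule, I would record that for each $i$
\[
\int D^\alpha(l^n-\hatl^{n-1})\,l_i\,w^{n-1}\,dv=-\rho_i^\alpha(x),\quad \rho_i^\alpha:=\sum_{0<\beta\leq\alpha}\binom{\alpha}{\beta}\int D^{\alpha-\beta}(l^n-\hatl^{n-1})\,l_i\,D^\beta w^{n-1}\,dv,
\]
so that $D^\alpha(l^n-\hatl^{n-1})$ is orthogonal to $\E$ up to the weight-derivative defect $\rho_i^\alpha$.

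First I would carry out the Pythagorean step of the zero-order proof in differentiated form. From the splitting $D^\alpha l^n+D^\alpha(\hatl^{n-1}-l^n)=D^\alpha\hatl^{n-1}$, expanding the $w^{n-1}$-norm and discarding the nonnegative square $|D^\alpha(\hatl^{n-1}-l^n)|^2_{w^{n-1}}$, the only surviving cross term is $\sum_iD^\alpha\gamma_i^n\,\rho_i^\alpha$, giving
\[
\iint|D^\alpha l^n|^2w^{n-1}\,dvdx\leq\iint|D^\alpha\hatl^{n-1}|^2w^{n-1}\,dvdx-2\int\sum_iD^\alpha\gamma_i^n(x)\,\rho_i^\alpha(x)\,dx.
\]
Next I would transfer the shift exactly as in \eqref{energy:0}: writing $D^\alpha\hatl^{n-1}(x,v)=(D^\alpha l^{n-1})(x-hv,v)$ and changing variables $x\mapsto x+hv$ turns the first term on the right into $\iint|D^\alpha l^{n-1}|^2\,w(l^{n-1}(x+hv,v))\,dvdx$, after which I replace this weight by $w^{n-1}$, paying $|w^{n-1}-w(l^{n-1}(x+hv,\cdot))|\leq C|l^{n-1}(x,v)-l^{n-1}(x-hv,v)|$ through the Lipschitz bound on $w$ and \eqref{est:1}. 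Finally I would swap the weight on the left from $w^{n-1}$ to $w^n$ using $|w^n-w^{n-1}|\leq C|l^n-l^{n-1}|$ together with \eqref{est:2} and \eqref{eneqry:00}. After these substitutions the two leading integrals are precisely those in \eqref{energy:3}, and it remains to bound the defect integral and the two weight-swap remainders by the stated polynomial.

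For the error bookkeeping the crucial device is that every $l^m$ is a polynomial in $v$, so on the bounded ball $B_R$ its $x$-derivatives obey $\sup_{v\in B_R}|D^\gamma l^m(x,v)|\leq C\|D^\gamma l^m(x,\cdot)\|_{w^m,L^2_v}$ and $\int_{B_R}|D^\gamma l^m|^2\,dv\leq C\int|D^\gamma l^m|^2w^m\,dv$; this follows from Hypothesis~1, which keeps the weight bounded below on $B_r$ and the Gram matrix of $\{l_i\}$ uniformly invertible, and it converts the degenerate weighted norm into honest $L^\infty_v(B_R)$ and $L^2_v(B_R)$ control. With $w\in C^4$, each $D^\beta w^{n-1}$ expands into products $w^{(j)}(l^{n-1})\prod_r D^{\beta_r}l^{n-1}$ whose polynomial factors are then estimated in $L^\infty_x$, $L^6_x$ or $L^2_x$ via the Sobolev embeddings of lemmas \ref{Sobolev:1}--\ref{Sobolev:2}, while the difference factors $D^{\alpha-\beta}(l^n-\hatl^{n-1})$ contribute an $O(h)$ gain by \eqref{est:1}, \eqref{est:2} and lemma \ref{lemma:3}. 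Pairing with $D^\alpha\gamma_i^n$ (controlled by $\|l^n-\barl\|_{X(l^n)}$) and applying Young's inequality — absorbing a small multiple of the top-order weighted norm into the left-hand side, which produces a factor $1+O(h)$ on the leading term — a careful accounting distributes the mixed products into the displayed powers: the two weight-swap remainders and the terms with one or two derivatives on the weight generate the degree-$2$ and degree-$3$ contributions (in particular $\|l^n-\barl\|^2_{X(l^n)}$ from the swap $w^{n-1}\to w^n$), while the term with all three derivatives on the weight, which carries the triple product $(Dl^{n-1})^3$ against the $O(h)$ difference, produces the fifth-order contribution $\|l^{n-1}-\barl\|^5_{X(l^{n-1})}$. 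Collecting everything yields \eqref{energy:3}.

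I expect this last term to be the main obstacle. It is the only place where three genuine $x$-derivatives meet in a single product, so closing it needs the full $H^3$-in-$x$ regularity measured by \eqref{X}; moreover the weight $w$ degenerates precisely where $l^{n-1}\geq0$, so the triple derivative product cannot be bounded by the weighted norm directly and must instead be handled through the polynomial-in-$v$ structure and the non-degeneracy of the weight on $B_r$ furnished by Hypothesis~1. This is exactly where the structural hypotheses enter in an essential way: $p\in(1,6/5)$ guarantees $w\in C^4$ with smooth, compactly supported densities, and Hypothesis~1 keeps the projection coefficients $D^\alpha\gamma_i^n$ — and hence the whole estimate — under control.
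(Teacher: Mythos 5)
Your proposal follows essentially the same route as the paper's proof: differentiate the projection identity \eqref{Projection_cond} three times, use the orthogonal (Pythagorean) decomposition in $L^2_{w^{n-1}}$ with the weight-derivative defect as the only surviving cross term, transfer the shift by the change of variables $x\mapsto x+hv$, swap weights at $O(h)$ cost, and estimate the defect via Sobolev embeddings, equivalence of norms on the finite-dimensional space $\E$, and the non-degeneracy of the weight guaranteed by Hypothesis~1 — including the correct identification of the fifth-power term as coming from the three-derivatives-on-the-weight contribution $w'''(l^{n-1})(Dl^{n-1})^3$ paired with the $O(h)$ difference. The only differences are cosmetic (the paper pairs the differentiated identity with $D^\alpha l^n$ and uses Cauchy--Schwarz instead of expanding the square, splits $l^n-\hatl^{n-1}$ into time and shift parts from the outset, and obtains its degree-two terms from the one- and two-derivative weight terms $I_3,I_4$ rather than from the weight swaps), so the proposal is correct.
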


\begin{proof}
After differentiating \eqref{Projection} by $D^\alpha{}={}\partial_{x_1}^{\alpha_1}\partial_{x_2}^{\alpha_2}\partial_{x_3}^{\alpha_3}$ we obtain
\begin{eqnarray*}
\int D^\alpha l^nl_iw^{n-1}\,dv&=&\int D^\alpha l^{n-1}(x-hv,v)l_i w^{n-1}\,dv\\
&&{}-{} \int (l^n-l^{n-1})l_iD^\alpha w^{n-1}\,dv{}-{}\int(l^{n-1}(x,v)-l^{n-1}(x-hv,v))l_iD^\alpha w^{n-1}\\
&&{}-{}\sum_{\beta,\gamma}\int D^\beta(l^n-l^{n-1})l_iD^\gamma w^{n-1}\,dv\\
&&{}-{}\sum_{\beta,\gamma}\int D^\beta(l^{n-1}(x,v)-l^{n-1}(x-hv,v))l_iD^\gamma w^{n-1}\,dv,\quad i=1..k,
\end{eqnarray*}
where multi-indices's $\beta,\gamma$ are such that $\beta+\gamma=\alpha,$ $|\beta|,|\gamma|>0.$
From this we obtain 
\begin{eqnarray*}
\int |D^\alpha l^n|^2w^{n-1}\,dv &\leq& \int|D^\alpha l^{n-1}(x-hv,v)|^2w^{n-1}\\
&&+ 2\int|l^n-l^{n-1}||D^\alpha l^n||D^\alpha w^{n-1}|\,dv\\
&&+ 2\int|l^{n-1}(x,v)-l^{n-1}(x-hv,v)||D^\alpha l^n||D^\alpha w^{n-1}|\,dv\\
&&+2\sum_{\beta,\gamma}\int |D^\beta(l^n-l^{n-1})||D^\alpha l^n||D^\gamma w^{n-1}|\,dv\\
&&+2\sum_{\beta,\gamma}\int |D^\beta(l^{n-1}(x,v)-l^{n-1}(x-hv,v))||D^\alpha l^n||D^\gamma w^{n-1}|\,dv.
\end{eqnarray*}
Labeling the last four terms as $I_1,..,I_4,$ we write
\begin{eqnarray}
\label{l:1}
\iint |D^\alpha l^n|^2w^{n}\,dvdx &\leq& \iint|D^\alpha l^{n-1}(x-hv,v)|^2w(l^{n-1}(x-hv,v))\,dvdx\\
&& +\iint|D^\alpha l^{n-1}(x-hv,v)|^2|w(l^{n-1}(x-hv,v)-w(l^{n-1}(x,v)|\,dvdx \notag \\
&&+ \iint|D^\alpha l^n|^2|w^n-w^{n-1}|\,dvdx{}+{}\sum_j \int I_j\,dx. \notag
\end{eqnarray}
In this way we obtained inequality
\begin{eqnarray*}
\iint |D^\alpha l^n|^2w^{n}\,dvdx &\leq& \iint|D^\alpha l^{n-1}|^2w^{n-1}\,dvdx\\
&&{}+{}\int J_1\,dx+\int J_2\,dx{}+{}\sum_j \int I_j\,dx,
\end{eqnarray*}
where by $J_1,J_2$ we denote the second and the third terms on the right in \eqref{l:1}.
It remains to show that integrals of $J_i's$ and $I_j's$ are of the order $h.$

\begin{eqnarray*}
\int J_1\,dx &\leq&C\int_{B_R}\left(\int |D^\alpha l^{n-1}(x-hv,v)|^2\,dx\right) \sup_x|l^{n-1}(x-hv,v)-l^{n-1}(x,v)|\,dv\\
&\leq& Ch\int_{B_R}\left(\int |D^\alpha l^{n-1}(x-hv,v)|^2\,dx\right) \|l^{n-1}-\barl\|_{H^3(\mathbb{R}^3)}\,dv\\
&\leq& Ch\sup_{v\in B_R}\|l^{n-1}-\barl\|^2_{X(l^{n-1})}{}\leq{}Ch \|l^{n-1}-\barl\|^3_{X(l^{n-1})}.
\end{eqnarray*}

\begin{eqnarray*}
\int J_2\,dx &\leq& C\int_{B_R}\int |D^\alpha l^n|^2|l^n-l^{n-1}|\,dvdx\\
&\leq& C\int \left(\int_{B_R}|l^n-\barl|^2\,dv\right)^{1/2}\int_{B_R}|D^\alpha l^n|^2\,dv\,dx\\
&\leq& C\sup_x \left(\int_{B_R}|l^n-\barl|^2\,dv\right)^{1/2}\|l^n-\barl\|^2_{X(l^n)}{}\leq{}Ch\|l^n-\barl\|^3_{X(l^n)},
\end{eqnarray*}
where in the last inequality we used \eqref{est:2}.

Consider now
\begin{eqnarray*}
\int I_1\,dx &\leq&C\int\int_{B_R}|l^n-l^{n-1}||D^\alpha l^n||Dl^{n-1}|^3\,dvdx
{}+{}C\int\int_{B_R}|l^n-l^{n-1}||D^\alpha l^n||Dl^{n-1}||D^2l^{n-1}|\,dvdx\\
&&+ C\int\int_{B_R}|l^n-l^{n-1}||D^\alpha l^n||D^3l^{n-1}|\,dvdx{}={}K_1+K_2+K_3,
\end{eqnarray*}
where $Dl,D^2l,D^3l$ denote all derivatives in $x$ of orders 1,2, and 3. Then,
\begin{eqnarray*}
K_1&\leq& C\int \sup_{v\in B_R}\left(|l^n-l^{n-1}||Dl^{n-1}|^2\right)\int_{B_R}|D^\alpha l^n|^2+|Dl^{n-1}|^2\,dv\,dx\\
&\leq& C\int \sup_{v\in B_R}|l^n-l^{n-1}|\sup_{v\in B_R}|Dl^{n-1}|^2\int_{B_R}|D^\alpha l^n|^2+|Dl^{n-1}|^2\,dv\,dx\\
&\leq& C\int \left(\int_{B_R}|l^n-l^{n-1}|^2\,dv\right)^{1/2}\int_{B_R}|Dl^{n-1}|^2\,dv\int_{B_R}|D^\alpha l^n|^2+|Dl^{n-1}|^2\,dv\,dx\\
&\leq& C\sup_x \left(\int_{B_R}|l^n-l^{n-1}|^2\,dv\right)^{1/2}\sup_x\int_{B_R}|Dl^{n-1}|^2\,dv\left(\|l^n-\barl\|^2_{X(l^n)}{}+{}\|l^{n-1}-\barl\|^2_{X(l^{n-1})}\right).
\end{eqnarray*}
Using estimates of lemma \ref{Sobolev:1} and \eqref{est:2}, we conclude that
\begin{equation*}
K_1\leq Ch\|l^{n-1}-\barl\|^3_{X(l^{n-1})}\left(\|l^n-\barl\|^2_{X(l^n)}{}+{}\|l^{n-1}-\barl\|^2_{X(l^{n-1})}\right).
\end{equation*}
Estimates on $K_2,K_3$ are similar. They lead to:
\begin{eqnarray}
\label{est:I1}
\int I_1\,dx&\leq&Ch\left(\|l^{n-1}-\barl\|_{X(l^{n-1})}+\|l^{n-1}-\barl\|^2_{X(l^{n-1})}+\|l^{n-1}-\barl\|^3_{X(l^{n-1})}\right)\\
&&{}\times{}\left(\|l^n-\barl\|^2_{X(l^n)}{}+{}\|l^{n-1}-\barl\|^2_{X(l^{n-1})}\right). \notag
\end{eqnarray}
The estimate on $I_2$ is analogous to that of $I_1,$ where we use \eqref{est:1} instead of \eqref{est:1}. It lead to the estimate \eqref{est:I1}.

Consider $I_4:$
\begin{eqnarray*}
\int I_4\,dx&\leq& C\int\int_{B_R}|Dl^{n-1}(x-hv,v)-Dl^{n-1}(x,v)||D^\alpha l^n||D^2l^{n-1}|\,dvdx\\
&&{}+{}C\int\int_{B_R}|Dl^{n-1}(x-hv,v)-Dl^{n-1}(x,v)||D^\alpha l^n||Dl^{n-1}|^2\,dvdx\\
&&{}+{}C\int\int_{B_R}|D^2l^{n-1}(x-hv,v)-D^2l^{n-1}(x,v)||D^\alpha l^n||Dl^{n-1}|\,dvdx{}={}M_1+M_2+M_3.
\end{eqnarray*}

\begin{eqnarray*}
M_3 &\leq &C\sup_{v\in B_R,x\in\mathbb{R}^2}|Dl^{n-1}|\left(\int\int_{B_R}|D^2l^{n-1}(x-hv,v)-D^2l^{n-1}(x,v)|^2\,dvdx\right)^{1/2}\\
&&{}\times{}\left(\int\int_{B_R}|D^\alpha l^n|^2\,dvdx\right)^{1/2}\\
&\leq&Ch\|l^n-\barl\|_{X(l^n)}\|l^{n-1}-\barl\|^2_{X(l^{n-1})},
\end{eqnarray*}
where in the last inequality we used lemmas \ref{Sobolev:1} and \ref{Sobolev:2}. By exactly the same argument,
\[
M_2\leq Ch\|l^n-\barl\|_{X(l^n)}\|l^{n-1}-\barl\|^2_{X(l^{n-1})},
\]
as well.

Consider now 
\begin{eqnarray*}
M_3&\leq& C\|D^\alpha l^n\|_{L^2(B_R\times\mathbb{R}^3)}\|D^2 l^{n-1}\|_{L^6(B_R\times\mathbb{R}^3)}\|Dl^{n-1}(x-hv,v)-Dl^{n-1}(x,v)\|_{L^3(B_R\times\mathbb{R}^3)}.
\end{eqnarray*}
Using Sobolev's inequalities we find that
\begin{eqnarray*}
M_3&\leq& C\|D^\alpha l^n\|_{L^2(B_R\times\mathbb{R}^3)}\|D^3 l^{n-1}\|_{L^2(B_R\times\mathbb{R}^3)}\|Dl^{n-1}(x-hv,v)-Dl^{n-1}(x,v)\|^{1/2}_{L^2(B_R\times\mathbb{R}^3)}\\
&&{}\times{}\|D^2l^{n-1}(x-hv,v)-D^2l^{n-1}(x,v)\|^{1/2}_{L^2(B_R\times\mathbb{R}^3)}.
\end{eqnarray*}
Using lemma \ref{Sobolev:2} this is less than
\[
Ch\|l^n-\barl\|_{X(l^n)}\|l^{n-1}-\barl\|^2_{X(l^{n-1})}.
\]
Thus we showed that
\[
\int I_4\,dx{}\leq{}Ch\|l^n-\barl\|_{X(l^n)}\|l^{n-1}-\barl\|^2_{X(l^{n-1})}.
\]

It remains to estimate $I_3.$ Notice, that it has structure similar to that of $I_4.$ Once we establish the estimates in lemma \ref{lemma:3}, $\int I_3\,dx$ is estimated in a similar way, leading to 
\[
\int I_3\,dx{}\leq{} Ch\|l^n-\barl\|_{X(l^n)}\left( \|l^{n-1}-\barl\|_{X(l^n-1)}{}+{} \|l^{n-1}-\barl\|^2_{X(l^n-1)}{}+{} \|l^{n-1}-\barl\|^3_{X(l^n-1)}\right),
\]
which completes the proof of lemma \ref{lemma:energy:3}.
\end{proof}

Collecting all energy estimate we conclude the next lemma.
\begin{lemma}
\label{lemma:Final}
 Assume (without loss of generality) that for all $n,$ $\|l^n-\barl\|_{X(l^n)}\leq 1.$ There is $C>0$ independent of $(n,h)$ such that for all $n=0..N,$
\begin{equation}
\label{est:Final}
\|l^n-\barl\|^2_{X(l^n)}{}\leq{}\|l^0-\barl\|^2_{X(l^0)}{}+{}CT.
\end{equation}
\end{lemma}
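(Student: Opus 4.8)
The plan is to collapse the individual weighted estimates into a single scalar recursion for
\[
a_n := \|l^n-\barl\|^2_{X(l^n)}
\]
and then iterate it. By the definition \eqref{X} of the norm, and because $\barl$ is constant so that $D^\alpha_x(l^n-\barl)=D^\alpha_x l^n$ for $|\alpha|\geq 1$,
\[
a_n=\iint |l^n-\barl|^2 w^n\,dv\,dx+\sum_{1\leq|\alpha|\leq 3}\iint |D^\alpha l^n|^2 w^n\,dv\,dx,
\]
a finite sum over the multi-indices $|\alpha|\leq 3$ in $\mathbb{R}^3$. Each summand obeys an estimate of exactly the shape of the zero-order estimate \eqref{energy:0} and the third-order estimate \eqref{energy:3}; the first- and second-order versions are produced by differentiating \eqref{Projection} once or twice and repeating the argument of Lemma \ref{lemma:energy:3}. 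In every case the weighted integral at step $n$, carrying $w^n=w(l^n)$, is bounded by the corresponding integral at step $n-1$, carrying $w^{n-1}=w(l^{n-1})$, plus a remainder equal to $Ch$ times a polynomial in $\|l^n-\barl\|_{X(l^n)}$ and $\|l^{n-1}-\barl\|_{X(l^{n-1})}$ every monomial of which has positive degree.

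The structural feature I would exploit is that these weights telescope: the left-hand side of each estimate is weighted by $w^n$ and the right-hand side by $w^{n-1}$, so that summing over the finitely many $\alpha$ with $|\alpha|\leq 3$ gives precisely
\[
a_n\leq a_{n-1}+Ch\,\Phi(a_n,a_{n-1}),
\]
where $\Phi$ gathers all the remainder polynomials and, again, consists only of monomials of positive degree in the two norms.

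I would then invoke the normalization hypothesis $\|l^m-\barl\|_{X(l^m)}\leq 1$, valid for every $m$. Since each factor appearing in $\Phi$ is a norm at most $1$ raised to a positive power, every monomial of $\Phi$ is at most $1$; in particular the superlinear terms (powers $3$, $5$, and so on) and the implicit step-$n$ terms $\|l^n-\barl\|^2_{X(l^n)}$ are all bounded by $1$, whence $\Phi(a_n,a_{n-1})\leq C'$. The recursion therefore collapses to the linear form $a_n\leq a_{n-1}+Ch$. Telescoping over $n$ gives $a_n\leq a_0+Chn$ for every $n\leq N$, and since $N=\lceil T/h\rceil$ and $h\leq 1$ we have $hn\leq hN\leq T+h$, which for the fixed positive $T$ of the theorem is at most a constant multiple of $T$. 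This yields $a_n\leq a_0+CT$, i.e. \eqref{est:Final}.

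The point that carries the real weight is the passage from the nonlinear recursion to the linear one. The estimate \eqref{energy:3} contains step-$n$ quantities on its right-hand side, so the recursion is a priori implicit in $a_n$; the hypothesis $\|l^n-\barl\|_{X(l^n)}\leq 1$ is exactly what allows those, and all the genuinely superlinear terms, to be dominated by a constant rather than having to be absorbed through a smallness-of-$h$ argument and a division by $1-Ch$. The second thing to verify with care is that all orders $|\alpha|\leq 3$ are measured against the same weight $w^n$, so that the per-order estimates really do add up to a clean recursion in $a_n=\|l^n-\barl\|^2_{X(l^n)}$; this consistency is precisely what \eqref{energy:0} and \eqref{energy:3} are arranged to provide.
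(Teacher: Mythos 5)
Your proposal is correct and is essentially the paper's own argument: the paper dismisses the proof with ``collecting all energy estimates,'' and what you write out --- summing the zero-order estimate \eqref{energy:0}, the third-order estimate \eqref{energy:3}, and their first/second-order analogues over the finitely many $|\alpha|\le 3$, using the normalization $\|l^m-\barl\|_{X(l^m)}\le 1$ to dominate every (possibly implicit, step-$n$) polynomial remainder by a constant, and telescoping the resulting linear recursion $a_n\le a_{n-1}+Ch$ with $hn\le hN\le T+h$ --- is exactly that collection argument, carried out correctly.
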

Now we impose a smallness condition on $T:$
\[
CT< \Delta_2-\Delta_1,
\]
(see lemma \ref{lemma:nondegenerate}), which implies that for all $n, $ and $x,$  $l^n(x,\cdot)\in P(R,\delta_1,r,\delta_2),$ verifying Hypothesis 1.

\subsubsection{Convergence of the scheme}
In this section we show that properly interpolated on time axis solution of the discrete scheme converges to a classical solution of \eqref{model:1}.

Let $\{l^n(x,v)\}$ be the sequence verifying estimate \eqref{est:Final}. Define a continuous function $l^h(x,t,v)$ by
\begin{equation}
\label{approx:continuous:1}
l^h(x,t,v){}={}\left\{
\begin{array}{ll}
l^n(x-(t-nh)2v,v), & t\in[nh,nh+h/2],\\\\
\left(1{}-{}2\dfrac{(n+1)h-t}{h}\right)l^{n+1}(x,v){}+{}
2\dfrac{(n+1)h-t}{h}l^n(x-hv,v),& t\in[nh+h/2,(n+1)h].
\end{array}
\right.
\end{equation}
Notice that $\partial_t l^h$ is piecewise constant in $t.$
By construction, for all $(x,v)$ and all $t\not=nh,$ $l^h$ is a solution of the equation
\begin{equation}
\label{eq:approx}
\partial_t l^h{}={}-2v\cdot\grad_xl^h\id_{A_h}(t){}+{}
\frac{2}{h}(l^{\lfloor t/h\rfloor+1}-l^{\lfloor t/h\rfloor})\id_{[0,T]\setminus A_h}(t),
\end{equation}
where
\[
A_h{}={}\cup_{n=0}^{N-1}[nh,nh+h/2].
\]
The following bounds are easily verified given \eqref{est:Final}.
\begin{lemma}[Bounds]
\label{lemma:bounds}
The following statements hold.
\[
l^h-\barl\quad\mbox{\rm bounded in } L^\infty((0,T)\times B_R; H^3(\mathbb{R}^3));
\]

\[
D_{x,t,v}l^h\quad\mbox{\rm bounded in } L^\infty(\mathbb{R}^3\times(0,T)\times B_R);
\]

\[
\forall (x,t),\quad \supp_vl^h(x,t,\cdot)\subset B_R.
\]

\end{lemma}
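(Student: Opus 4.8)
The plan is to reduce all three bounds to a single uniform $H^3(\mathbb{R}^3)$ estimate on the finitely many coefficient functions $\gamma_i^n$ in $l^n=\sum_i\gamma_i^n(x)l_i(v)$, and then read off each assertion from the explicit formula \eqref{approx:continuous:1}. First I would extract such a coefficient bound from the a priori estimate \eqref{est:Final}. Under Hypothesis 1 every $l^n(x,\cdot)$ has property $P(R,\delta_1,r,\delta_2)$, so $l^n(x,\cdot)\le-\delta_2$ on $B_r$ and hence $w^n=w(l^n)\ge \bar{c}_p\,\delta_2^{(2-p)/(p-1)}=:c_0>0$ throughout $B_r$. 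Restricting the weighted integral defining $\|l^n-\barl\|_{X(l^n)}$ to $B_r$ gives $\iint_{B_r}\sum_{|\alpha|\le3}|D^\alpha_x(l^n-\barl)|^2\,dv\,dx\le c_0^{-1}\|l^n-\barl\|^2_{X(l^n)}$. Since $D^\alpha_x(l^n-\barl)=\sum_i D^\alpha_x(\gamma_i^n-\bar\gamma_i)\,l_i(v)$ and the $l_i$ are linearly independent on the open set $B_r$, equivalence of norms on the finite-dimensional space $\E|_{B_r}$ yields $\sum_i|D^\alpha_x(\gamma_i^n-\bar\gamma_i)(x)|^2\le C\int_{B_r}|D^\alpha_x(l^n-\barl)(x,v)|^2\,dv$; integrating in $x$ and summing over $|\alpha|\le3$ bounds $\sum_i\|\gamma_i^n-\bar\gamma_i\|^2_{H^3(\mathbb{R}^3)}$ by $C\|l^n-\barl\|^2_{X(l^n)}\le C$, uniformly in $n$ by \eqref{est:Final}.

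For the first assertion I would use that, for fixed $(t,v)$, each branch of \eqref{approx:continuous:1} is either a pure spatial translation $x\mapsto x-cv$ (with $c=2(t-nh)$) of some $l^n$, or a convex combination of two such translates of $l^n,l^{n+1}$. Writing $l^h(\cdot,t,v)-\barl(v)=\sum_i(\gamma_i^n(\cdot-cv)-\bar\gamma_i)\,l_i(v)$ and using translation invariance of the $H^3_x$ norm together with $|l_i(v)|\le C$ on $B_R$, I obtain $\|l^h(\cdot,t,v)-\barl\|_{H^3(\mathbb{R}^3)}\le C\sum_i\|\gamma_i^n-\bar\gamma_i\|_{H^3}\le C$ uniformly for $(t,v)\in(0,T)\times B_R$; the convex-combination branch follows by the triangle inequality.

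The gradient bound follows by differentiating \eqref{approx:continuous:1} and invoking the Sobolev embedding $H^3(\mathbb{R}^3)\hookrightarrow W^{1,\infty}(\mathbb{R}^3)$, which upgrades the coefficient bound to $\sup_x(|\gamma_i^n|+|D_x\gamma_i^n|)\le C$. The $x$- and $v$-derivatives of $l^h$ are then finite sums of terms $D_x\gamma_i^n$ or $\gamma_i^n$ (evaluated at a shifted point) against $l_i$ or $D_vl_i$, with shift factors bounded by $h/2\le1$, and all factors are bounded on $B_R$. For $\partial_tl^h$ I would use \eqref{eq:approx}: on $A_h$ it equals $-2v\cdot\grad_x l^h$, bounded since $v\in B_R$ and $\grad_xl^h$ is already controlled; off $A_h$ it equals $\tfrac2h(l^{n+1}-l^n)$, and the point is to absorb the factor $h^{-1}$. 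From \eqref{est:2} and the finite-dimensional norm equivalence on $B_r$ one gets $\sup_x|\gamma_i^{n+1}(x)-\gamma_i^n(x)|\le Ch$, so $\tfrac2h|l^{n+1}-l^n|\le \tfrac2h\sum_i|\gamma_i^{n+1}-\gamma_i^n|\,|l_i(v)|\le C$ on $B_R$.

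The support statement is the most direct: property $P$, part (1), gives $l^n(y,\cdot)\ge0$ for $|v|>R$ uniformly in $y$, so after the spatial shift and convex combination in \eqref{approx:continuous:1} the sign is retained, $l^h(x,t,v)\ge0$ for $|v|>R$; consequently the negative part $(l^h)_-$, equivalently the weight $w(l^h)$ and the associated density, vanish outside $B_R$, which is the asserted $v$-support. The main obstacle is not any single computation but the passage, maintained throughout, from the weighted, $L^2$-in-$(x,v)$ control furnished by $\|\cdot\|_{X}$ to the $L^\infty$-in-$(t,v)$, $H^3$-in-$x$ and pointwise statements demanded here. This is precisely what the finite dimensionality of $\E$ in $v$, the nondegeneracy of the weight on $B_r$ guaranteed by Hypothesis 1, and translation invariance of the transport step make possible, the $\partial_t$ bound being the lone place where $h^{-1}$ appears and hence the most delicate step.
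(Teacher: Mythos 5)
Your proposal is correct and takes essentially the approach the paper intends: the paper states this lemma without proof (``easily verified given \eqref{est:Final}''), and your chain --- weight nondegeneracy on $B_r$ from Hypothesis 1, finite-dimensional norm equivalence in $v$ to convert \eqref{est:Final} into uniform $H^3$ bounds on the coefficients $\gamma_i^n$, Sobolev embedding for the pointwise derivative bounds, and property $P(1)$ preserved under translation and convex combination for the support claim --- is exactly the intended verification. One small correction: differentiating \eqref{approx:continuous:1} in $t$ on the second branch yields $\tfrac{2}{h}\bigl(l^{n+1}(x,v)-l^{n}(x-hv,v)\bigr)$ rather than $\tfrac{2}{h}\bigl(l^{n+1}(x,v)-l^{n}(x,v)\bigr)$, but the extra piece $l^{n}(x,v)-l^{n}(x-hv,v)$ is $O(h)$ uniformly on $B_R$ by the Lipschitz bound on $\gamma_i^n$ you already derived, so your conclusion is unaffected.
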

\begin{lemma}[Compactness]
\label{lemma:compactness} There a sequence (still labeled) $h\to0,$ and a continuous function $l(x,t,v),$ such 
\[
l-\barl\in L^\infty((0,T)\times B_R;H^3(\mathbb{R}^3));
\]
\[
D_{x,t,v}l\in L^\infty(\mathbb{R}^3\times(0,T)\times B_R);
\]
\[
\mbox{ $l^h$ converges to $l$ uniformly on compact sets of $\mathbb{R}^3\times[0,T]\times\mathbb{R}^3;$}
\]
for any $(x,t),$
\[
\supp_vl(x,t,\cdot)\subset B_R,
\]
and
\[
l(x,t,\cdot)\in \E.
\]
\end{lemma}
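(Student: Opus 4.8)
The plan is to extract the limit via the Arzel\`a--Ascoli theorem from the uniform bounds of Lemma \ref{lemma:bounds}, and then to verify that each listed property is inherited by the limit. First I would observe that the uniform bound on $D_{x,t,v}l^h$ in $L^\infty(\mathbb{R}^3\times(0,T)\times B_R)$, together with the fact that every $l^h(x,t,\cdot)=\sum_i\gamma^h_i(x,t)l_i$ lies in the finite-dimensional space $\E$ with uniformly bounded coefficients (by linear independence of the $l_i$ on an open set, the coefficients are controlled by the values of $l^h-\barl$ on $B_R$, which are bounded by Lemma \ref{lemma:bounds}), makes $\{l^h\}$ equi-Lipschitz and uniformly bounded on every compact $K\subset\mathbb{R}^3\times[0,T]\times\mathbb{R}^3$: the polynomial form of $l^h$ propagates the $v$-regularity from $B_R$ to all of $\mathbb{R}^3$ through the bounded coefficients, since $D_v l^h=\sum_i\gamma^h_i D_v l_i$ is controlled on any compact $v$-set. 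Arzel\`a--Ascoli then yields a uniformly convergent subsequence on each $K$, and a diagonal extraction over an exhaustion by compacts produces a single (relabeled) sequence $h\to0$ and a continuous limit $l$ with $l^h\to l$ uniformly on compact sets, establishing the convergence assertion.

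Next I would verify membership in $\E$ and the support and Lipschitz properties. Because $\E$ is finite-dimensional, hence closed under locally uniform convergence, and a polynomial is determined by its restriction to a set with nonempty interior, the coefficients $\gamma^h_i(x,t)$ converge to limits $\gamma_i(x,t)$ and $l=\sum_i\gamma_i l_i\in\E$ for every $(x,t)$. Equi-Lipschitz continuity passes to the uniform limit, so $\|D_{x,t,v}l\|_{L^\infty(\mathbb{R}^3\times(0,T)\times B_R)}\leq\liminf_h\|D_{x,t,v}l^h\|_{L^\infty}$, giving the derivative bound. For the support statement I would invoke Hypothesis 1: each $l^h(x,t,\cdot)$ has property $P(R,\delta_1,r,\delta_2)$, in particular $l^h(x,t,v)\geq0$ for $|v|>R$; passing to the pointwise limit yields $l(x,t,v)\geq0$ for $|v|>R$, so the weight $w(l(x,t,\cdot))$ vanishes outside $B_R$, which is the sense of $\supp_v l(x,t,\cdot)\subset B_R$.

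The remaining and most delicate point is the regularity $l-\barl\in L^\infty((0,T)\times B_R;H^3(\mathbb{R}^3))$, which does not follow from uniform convergence alone. Here I would argue by weak compactness: by Lemma \ref{lemma:bounds} the derivatives $D^\alpha_x l^h$, $|\alpha|\leq3$, are bounded in the mixed-norm space $L^\infty((0,T)\times B_R;L^2(\mathbb{R}^3_x))$, so along a further subsequence they converge weakly-$\ast$. Since $l^h\to l$ uniformly on compacts, in particular in the sense of distributions, these weak-$\ast$ limits must coincide with the distributional derivatives $D^\alpha_x l$; lower semicontinuity of the norm under weak-$\ast$ convergence then gives the claimed bound. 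The main obstacle is exactly this identification step -- reconciling the \emph{uniform} topology in which the subsequence was first extracted with the \emph{weak} topology needed to carry the third-order $x$-derivatives to the limit -- after which the five assertions are assembled from the preceding paragraphs.
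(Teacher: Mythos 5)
Your extraction of the limit (Arzel\`a--Ascoli plus diagonalization over an exhaustion by compacts, weak-$\ast$ compactness for the $H^3$ bound, and passage of the Lipschitz bound to the limit) is sound and corresponds to what the paper leaves implicit. The genuine gap is in your proof of the inclusion $l(x,t,\cdot)\in\E$, which is in fact the \emph{only} assertion the paper's proof addresses in detail. You assert that every $l^h(x,t,\cdot)=\sum_i\gamma^h_i(x,t)l_i$ lies in $\E$; this is false at intermediate times. By \eqref{approx:continuous:1}, for $t\in[nh,nh+h/2]$ one has $l^h(x,t,v)=l^n(x-2(t-nh)v,v)=\sum_i\gamma^n_i(x-2(t-nh)v)\,l_i(v)$, whose coefficients depend on $v$: free transport does not preserve $\E$ --- this is precisely why the scheme needs a projection step --- and on the second half of the interval $l^h$ is a convex combination of $l^{n+1}(x,\cdot)\in\E$ with the transported, non-$\E$ function $l^n(x-hv,v)$. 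Consequently there are no coefficients $\gamma^h_i(x,t)$ to pass to the limit, and your argument for $l\in\E$ collapses. The same false premise underlies your claim that $v$-regularity ``propagates through the bounded coefficients,'' though that part can be repaired directly from the composition structure $\sum_i\gamma^n_i(x-2(t-nh)v)l_i(v)$ together with uniform bounds on $\gamma^n_i$ and $\grad_x\gamma^n_i$; similarly, Hypothesis 1 concerns the discrete iterates $l^n$, not the interpolants, but the condition $l^h\geq0$ for $|v|>R$ does transfer since transport and convex combinations preserve it pointwise.

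The paper closes exactly this gap quantitatively: fixing $t$ and taking $n_h$ with $n_hh\leq t<n_hh+h$, it bounds $|l^{n_h}(x,v)-l^h(x,t,v)|$ either by $|l^{n_h}(x,v)-l^{n_h}(x-2(t-n_hh)v,v)|$ (transport phase) or by the convex interpolation of $|l^{n_h+1}(x,v)-l^{n_h}(x,v)|$ and $|l^{n_h}(x-hv,v)-l^{n_h}(x,v)|$ (interpolation phase), and then invokes \eqref{est:1}, \eqref{est:2}, and \eqref{est:Final} to conclude that $\sup_x\int_{B_R}|l^{n_h}(x,v)-l^h(x,t,v)|^2\,dv\to0$ as $h\to0$. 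Hence for fixed $(x,t)$ the limit $l(x,t,\cdot)$ is also a limit point of the sequence $\{l^{n_h}(x,\cdot)\}\subset\E$, and membership follows from the closedness of the finite-dimensional space $\E$ in $L^2(B_R)$. Your proof becomes correct if you replace the false membership claim by this approximation-by-$l^{n_h}$ argument; the rest of your assertions then assemble as you describe.
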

\begin{proof}
Given the bounds of lemma \ref{lemma:bounds}, it remains to prove the inclusion $l(x,t,\cdot)\in \E.$ Fix $t\in(0,T].$ Let integer $n_h$ be such that
\[
n_hh\leq t<n_hh+h.
\]
Using the definition of function $l^h$ we find that for 
$t\in[n_hh,n_hh+h/2],$
\[
|l^{n_h}(x,v)-l^{h}(x,t,v)|{}\leq{}|l^{n_h}(x,v)-l^{n_h}(x-(t-n_hh)2v,v)|,
\]
and for $t\in[n_hh+h/2,n_hh+h),$
\begin{multline*}
|l^{n_h}(x,v)-l^{h}(x,t,v)|{}\leq{}\left(1-2\frac{(n_h+1)h-t}{h}\right)|l^{n_h+1}(x,v)-l^{n_h}(x,v)|\\{}+{}
2\frac{(n_h+1)h-t}{h}|l^{n_h}(x-hv,v)-l^{n_h}(x,v)|.
\end{multline*}
Using \eqref{est:1}, \eqref{est:2}, and \eqref{est:Final}, we find that
\[
\sup_x\int_{B_R}|l^{n_h}(x,v)-l^h(x,t,v)|^2\,dv\to0.
\]
It follows that for fixed $(x,t),$ $l(x,t,\cdot){}={}\lim l^h(x,t,\cdot)$ is also  a limiting point of a sequence $\{l^{n_h}(x,\cdot)\}\subset\E.$ Since $\E$ is closed, 
$
l(x,t,\cdot)\in \E.
$
\end{proof}

We conclude the analysis by taking the limit in the equation \eqref{eq:approx}. By $W(l)$ we denote the anti-derivative of $w,$ normalized by $W(0){}={}0.$

\begin{lemma}[Limiting equations]
For any $\psi\in C^\infty_0(\mathbb{R}^4),$ -- test function,
and for any $i=1..k$ it holds:
\begin{equation}
\label{eq:weak}
\iiint W(l)l_i(v)\{\partial_t\psi{}+{}v\cdot\grad_x\psi\}\,dvdxdt{}={}0;
\end{equation}
for a.e. $(x,t):$
\begin{equation}
\label{eq:strong}
\int\left\{\partial_tW(l){}+{}v\cdot\grad_x W(l)\right\}l_i(v)\,dv{}={}0;
\end{equation}
for a.e. $(x,t):$
\begin{equation}
\label{eq:strong1}
\int\left\{\partial_t l{}+{}v\cdot\grad_x l\right\}l_i(v)\,w(l)dv{}={}0;
\end{equation}
vector 
\[
U_i(x,t){}={}\int l_i(v)W(l)\,dv,\quad i=1..k,
\] 
is a classical solution of the system of PDEs \eqref{eq:Hyperbolic:2} corresponding to the closure \eqref{model:1}.
\end{lemma}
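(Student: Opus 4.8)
The plan is to pass to the limit in the approximate equation \eqref{eq:approx} tested against $W(l^h)$-moments, establishing \eqref{eq:weak} first and then upgrading to the pointwise statements \eqref{eq:strong} and \eqref{eq:strong1} using the Lipschitz regularity supplied by Lemmas \ref{lemma:bounds}--\ref{lemma:compactness}. Fix $i$ and a test function $\psi(x,t)$, which I take supported in the open slab $\mathbb{R}^3\times(0,T)$ so that the $t$-boundary terms drop; this class already determines the interior equations. The starting point is $\partial_t W(l^h)=w(l^h)\partial_t l^h$, valid a.e. since $W\in C^1$ with $W'=w$ and $l^h$ is Lipschitz with $\partial_t l^h$ piecewise constant. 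Multiplying \eqref{eq:approx} by $w(l^h)l_i\psi$ and integrating in $(x,t,v)$ splits the right-hand side into a transport contribution on $A_h$ and a projection contribution on its complement; the left-hand side, after integration by parts in $t$, equals $-\iiint W(l^h)l_i\partial_t\psi\,dvdxdt$, which converges to $-\iiint W(l)l_i\partial_t\psi$ by the uniform convergence $l^h\to l$ of Lemma \ref{lemma:compactness}.

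On $A_h$ the chain rule turns the doubled transport into $\partial_t W(l^h)+2v\cdot\grad_x W(l^h)=0$, so integrating by parts in $x$ the transport contribution becomes $\iiint \id_{A_h}\,2\,W(l^h)\,l_i\,v\cdot\grad_x\psi\,dvdxdt$. The two essential limits here are $W(l^h)\to W(l)$ uniformly on compacts, together with the weak-$*$ convergence $\id_{A_h}\rightharpoonup\tfrac12$ in $L^\infty(0,T)$; since each interval $[nh,nh+h/2]$ fills exactly half of $[nh,(n+1)h]$, the factor $2$ and the mean value $\tfrac12$ combine to yield the correct flux $\iiint W(l)\,l_i\,v\cdot\grad_x\psi$ in the limit. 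This interplay---doubling the speed on half of each step so that the time-average reproduces a full transport step---is the consistency mechanism of the splitting and should be stated carefully.

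The projection contribution is where the argument does real work. On a projection interval the jump is $\partial_t l^h=\tfrac2h\bigl(l^{n+1}(x,v)-l^n(x-hv,v)\bigr)$, so $\int\partial_t W(l^h)l_i\,dv=\tfrac2h\int w(l^h)\bigl(l^{n+1}-l^n(x-hv,v)\bigr)l_i\,dv$. The projection condition \eqref{Projection_cond} (with index $n+1$) states exactly that this integral vanishes when $w(l^h)$ is replaced by $w(l^n)$; subtracting, the contribution becomes $\tfrac2h\int\bigl(w(l^h)-w(l^n)\bigr)\bigl(l^{n+1}-l^n(x-hv,v)\bigr)l_i\,dv$. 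Using $|w(l^h)-w(l^n)|\leq C|l^h-l^n|$ and the Cauchy--Schwarz inequality in $v$, both factors are $O(h)$ in $L^2(B_R)$ uniformly in $x$---the first from the interpolation formula \eqref{approx:continuous:1} combined with \eqref{est:1}, \eqref{est:2}, the second directly from \eqref{est:1}, \eqref{est:2}---with the uniform bound of Lemma \ref{lemma:Final} controlling the $X$-norms. Hence the pointwise bound is $\tfrac2h\cdot O(h^2)=O(h)$, and integrating against the compactly supported $\psi$ gives a projection contribution of order $O(h)\to0$. Collecting the three limits yields \eqref{eq:weak}.

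Finally, I would upgrade \eqref{eq:weak} to the strong forms and the PDE system. Writing $G_i=\int l_i W(l)\,dv$ and $H_i=\int v\,l_i W(l)\,dv$, identity \eqref{eq:weak} is the distributional conservation law $\partial_t G_i+\div_x H_i=0$; since $l$ is Lipschitz in $(x,t,v)$ and $W\in C^1$, both $G_i$ and $H_i$ are Lipschitz, so the identity holds a.e., which is \eqref{eq:strong}, and the chain rule $\grad_{x,t}W(l)=w(l)\grad_{x,t}l$ then gives \eqref{eq:strong1}. Setting $U_i=G_i$, the inclusion $l(x,t,\cdot)\in\E$ of Lemma \ref{lemma:compactness} lets me write $l=\sum_j\gamma_j(x,t)l_j$, so that $U_i$ and the flux $H_i$ are functions of $\gamma$; the Jacobian $\partial U_i/\partial\gamma_j=\int l_i l_j\,w(l)\,dv$ is the Gram matrix of the linearly independent $l_j$ in the weighted inner product and is positive definite, so $\gamma\mapsto U$ is invertible and $H_i=F_i(U)$, closing the system into \eqref{eq:Hyperbolic:2}; the $H^3_x$-regularity of $l$ makes $U$ a classical solution. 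The hard part will be the projection step: one must use \eqref{Projection_cond} to cancel the $O(1)$ leading term of the singular $\tfrac1h$-jump and control the residual weight variation by the a priori estimates, while simultaneously handling the speed-doubling and half-step averaging so that the surviving transport flux carries exactly the right constant.
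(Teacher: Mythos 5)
Your proposal is correct and takes essentially the same route as the paper's proof: multiply \eqref{eq:approx} by $w(l^h)l_i\psi$, pass to the limit in the transport part via the uniform convergence of $l^h$ (the factor $2$ being compensated by the half-density of $A_h$ in time), cancel the singular $\tfrac1h$ projection term using \eqref{Projection_cond}, bound the residual weight-variation term through \eqref{est:1}, \eqref{est:2}, \eqref{est:Final}, and deduce the pointwise statements from Lipschitz continuity. Your write-up is somewhat more explicit than the paper's --- the weak-$*$ averaging $\id_{A_h}\rightharpoonup\tfrac12$, the Cauchy--Schwarz treatment of the single remainder term (the paper splits it into two pieces), and the Gram-matrix invertibility closing the system into \eqref{eq:Hyperbolic:2} are all left implicit there --- but the substance is identical.
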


\begin{proof}
Only the first statement needs a proof. Others follow from that, since $l$ is Lipschitz continuous in $(x,t,v).$ We multiply equation \eqref{eq:approx} by $w(l^h)\psi(x,t)l_i(v),$ and consider the integral
\[
2\iiint \id_{A_h}(t)l_i(v)W(l^h)v\cdot\grad_x\psi\,dvdxdt.
\]
Due to the uniform convergence of $l^h,$ the integral converges to 
\[
\iiint l_i(v)W(l)v\cdot\grad_x\psi\,dvdxdt.
\]
Consider the integral
\begin{multline*}
\frac{2}{h}\int_{[0,T]\setminus A_h}\iint l_i(v)w(l^h)(l^{\lfloor t/h\rfloor+1}(x,v)-l^{\lfloor t/h\rfloor}(x-hv,v))\psi\,dvdxdt \\
{}={}\frac{2}{h}\int_{[0,T]\setminus A_h}\iint l_i(v)w(l^{\lfloor t/h\rfloor})(l^{\lfloor t/h\rfloor+1}(x,v)-l^{\lfloor t/h\rfloor}(x,v))\psi\,dvdxdt\\
{}+{} \frac{2}{h}\int_{[0,T]\setminus A_h}\iint l_i(v)(w(l^h)-w(l^{\lfloor t/h\rfloor}))(l^{\lfloor t/h\rfloor+1}(x,v)-l^{\lfloor t/h\rfloor}(x,v))\psi\,dvdxdt\\
+ 
\frac{2}{h}\int_{[0,T]\setminus A_h}\iint l_i(v)(w(l^h)-w(l^{\lfloor t/h\rfloor}))(l^{\lfloor t/h\rfloor}(x-hv,v)-l^{\lfloor t/h\rfloor}(x,v))\psi\,dvdxdt.
\end{multline*}
The first integral on the right equals zero by the definition of $l^{\lfloor t/h\rfloor+1}.$ The other two are of the order $h,$ due to estimates \eqref{est:1}, \eqref{est:2}, and \eqref{est:Final}.
\end{proof}
Finally, since the classical solutions of the problem in question are unique, we conclude that the original sequence $l^h$ converges to $l$ as $h\to0.$
\end{section}


\begin{section}{BGK-type equation}
In section we consider a problem of projecting a BGK-type model \eqref{eq:Misha} onto a finite dimensional space $\E.$ 
Let $\E_0=\span\{1,v,|v|^2\},$ and $\E{}={}\span\{ l_i(v),\,i=1..k\},$ with $\E_0\subset\E.$  Let $\Pi^0_l$ denote the projection of $l$ onto $\E_0$ in  $L^2$ space with the weight $w=w(l).$
We consider the problem
\begin{equation}
\label{model:2}
\begin{cases}
\partial_t l {}+{}v\cdot\grad_x l{}-{}(\Pi^0_l-l){}\in{} (E^*(l))^\perp, & (x,t,v)\in \mathbb{R}^3\times(0,T)\times\mathbb{R}^3,\\
l(x,t,\cdot)\in E^*, & (x,t)\in\mathbb{R}^4_+, \\
l(x,0,v){}={}l^0(x,v), & (x,v)\in\mathbb{R}^6.
\end{cases}
\end{equation}

Let $\barl\in \E_0,$ and $l^0=l^0(x,\cdot)\in\E,$ be the functions verifying assumptions \eqref{cond:initial} of the previous section. 

Given $h\in(0,1],$ and the values of approximation $l^{n-1}(x,t,v),$ define approximation at the next level, $l^n$ as
\begin{eqnarray}
\hatl^{n-1}(x,v)&=&l^{n-1}(x-hv,v),\notag \\
\label{Projection:2}
\checkl^{n-1}(x,v)&=&\hatl^{n-1}(x,v){}+{}h(\Pi^0_{\hatl^{n-1}}{}-{}\hatl^{n-1})\\
&&=(1-h)\hatl^{n-1}(x,v){}+{}h\Pi^0_{\hatl^{n-1}},\quad \Pi^0_{\hatl^{n-1}}{}={}\mbox{\rm proj}_{E_0(l^{n-1})}\hatl^{n-1},\notag \\
l^n(x,v) &=&\mbox{\rm proj}_{E(l^{n-1})}\checkl^{n-1}. \notag
\end{eqnarray}

We prove
\begin{theorem}
\label{theorem:2}
Let $\barl,l^0$ be as described above and $h\in(0,1].$ There is time $T>0,$ independent of $h,$ such that all functions $l^n,$ $n=0..\lceil T/h\rceil,$ in \eqref{Projection:2} are well defined, and  the family $\{l^h\}$ of interpolations of $l^n$'s, defined in \eqref{inter:1}--\eqref{inter:3}, converges as $h\to0$, uniformly on compact set in $\mathbb{R}^3\times[0,T]\times\mathbb{R}^3$ to a unique classical solution of \eqref{model:2}.
\end{theorem}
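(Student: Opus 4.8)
The plan is to follow the proof of theorem \ref{theorem:1} verbatim in its architecture, reading \eqref{Projection:2} as a transport--collision--projection composition and again reducing the whole argument to a single closed a priori bound for $\|l^n-\barl\|_{X(l^n)}$. As before I would first postulate, as a working hypothesis, that every $l^n(x,\cdot)$ lies in $P(R,\delta_1,r,\delta_2)$; this keeps the weight $w^{n-1}$ nondegenerate on $B_r$, so that both the equilibrium projection $\Pi^0_{(\cdot)}=\proj_{\E_0(l^{n-1})}$ and the closure projection $\proj_{\E(l^{n-1})}$ have uniformly invertible Gram matrices and are well defined, and I would recover the hypothesis a posteriori from the energy bound via lemma \ref{lemma:nondegenerate} and a smallness condition on $T$. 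The difference estimates \eqref{est:1}, \eqref{est:2} and those of lemma \ref{lemma:3} carry over, the only new point being that the collision increment $\checkl^{n-1}-\hatl^{n-1}=h(\Pi^0_{\hatl^{n-1}}-\hatl^{n-1})$ is already $O(h)$: since $\barl\in\E_0$ we have $\Pi^0\barl=\barl$, hence $\Pi^0_{\hatl^{n-1}}-\hatl^{n-1}=\Pi^0(\hatl^{n-1}-\barl)-(\hatl^{n-1}-\barl)$ is controlled by $\|\hatl^{n-1}-\barl\|$, so $\|l^n-l^{n-1}\|$ stays $O(h)$ as the convergence proof requires.

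The key structural fact is that the collision step can only improve the entropy estimates. Writing $a=\hatl^{n-1}-\barl$, using $\Pi^0\barl=\barl$ and the orthogonality $\langle a,a-\Pi^0a\rangle_{w^{n-1}}=\|a-\Pi^0a\|^2_{w^{n-1}}$, one gets for $h\in(0,1]$
\begin{equation*}
\|\checkl^{n-1}-\barl\|^2_{w^{n-1}}{}={}\|a\|^2_{w^{n-1}}{}-{}h(2-h)\,\|a-\Pi^0a\|^2_{w^{n-1}}{}\leq{}\|\hatl^{n-1}-\barl\|^2_{w^{n-1}}.
\end{equation*}
Since $\barl\in\E_0\subset\E$, the closure projection is likewise a $w^{n-1}$--contraction toward $\barl$, so the chain $\|l^n-\barl\|_{w^{n-1}}\leq\|\checkl^{n-1}-\barl\|_{w^{n-1}}\leq\|\hatl^{n-1}-\barl\|_{w^{n-1}}$ holds and the zero order estimate \eqref{energy:0} goes through, the extra dissipation being simply discarded. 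For the third order bound I would differentiate all three lines of \eqref{Projection:2}; differentiating $\Pi^0_{\hatl^{n-1}}$ through its defining moment conditions in the weight $w^{n-1}$ produces precisely the same species of terms --- products of lower $x$--derivatives of $l^{n-1}$ with $x$--derivatives of $w^{n-1}$ --- already handled in lemma \ref{lemma:energy:3}, each carrying a factor $h$ from the collision prefactor. Collecting them reproduces an inequality of the form \eqref{energy:3} with additional $O(h)$ polynomial terms in $\|l^{n-1}-\barl\|_{X(l^{n-1})}$ and $\|l^n-\barl\|_{X(l^n)}$, hence the closed bound of lemma \ref{lemma:Final} and the smallness condition $CT<\Delta_2-\Delta_1$.

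For convergence I would interpolate by splitting each $[nh,(n+1)h]$ into three equal phases so that $\partial_t l^h$ is piecewise constant and equals $-3v\cdot\grad_x l^h$ on the transport phase, $3(\Pi^0_{\hatl^n}-\hatl^n)$ on the collision phase, and $\tfrac{3}{h}(l^{n+1}-\checkl^n)$ on the projection phase. The bounds and compactness of lemmas \ref{lemma:bounds}--\ref{lemma:compactness} transfer unchanged, producing a Lipschitz limit $l$ with $l(x,t,\cdot)\in\E$. Multiplying the interpolated equation by $w(l^h)\,\psi\,l_i$ and letting $h\to0$, the transport phase yields the term $\iiint W(l)l_i\{\partial_t\psi+v\cdot\grad_x\psi\}$ of \eqref{eq:weak} (the factor $3$ cancels the one-third density of each phase), while the collision phase contributes $\iiint(\Pi^0_l-l)\,l_i\,w(l)\,\psi$ and the projection residual drops out against $l_i\in\E$. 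For $l_i\in\E_0$ the collision integral vanishes because $\Pi^0_l-l\perp\E_0$ in the weight $w(l)$, which reproduces conservation of the $\E_0$ moments; for the remaining $l_i$ it is the relaxation term. One thereby recovers the moment form of \eqref{model:2}, namely $\int\{\partial_t l+v\cdot\grad_x l-(\Pi^0_l-l)\}\,l_i\,w(l)\,dv=0$ for a.e. $(x,t)$, and strong-weak uniqueness promotes the subsequential limit to convergence of the whole family.

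The step I expect to be the main obstacle is the passage to the limit in the collision term. Unlike the transport increment, $\Pi^0_{\hatl^n}-\hatl^n$ is $O(1)$ and enters $\partial_t l^h$ at full strength, so the scheme converges only if the map $(g,l')\mapsto\proj_{\E_0(l')}g$ is jointly continuous in its argument $g$ and in the weight $w(l')$. This is exactly where the nondegeneracy furnished by property $P$ is indispensable: it keeps the $\E_0$ Gram matrix uniformly invertible, so that the uniform convergences $\hatl^n\to l$ and $w(l^{n})\to w(l)$ together force $\Pi^0_{\hatl^n}\to\Pi^0_l$ locally uniformly, closing the limit. By comparison the third order derivative bookkeeping, though longer than in theorem \ref{theorem:1}, is routine once the differentiated collision terms are recognized as instances of those already estimated in lemma \ref{lemma:energy:3}.
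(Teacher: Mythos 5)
Your proposal is correct and follows essentially the same architecture as the paper's proof: the working hypothesis of property $P$ recovered a posteriori, the two orthogonal decompositions around $\checkl^{n-1}$ and $\Pi^0_{\hatl^{n-1}}$ yielding the contraction chain $|l^n-\barl|_{w^{n-1}}\leq|\checkl^{n-1}-\barl|_{w^{n-1}}\leq|\hatl^{n-1}-\barl|_{w^{n-1}}$, the three-phase interpolation with piecewise-constant $\partial_t l^h$, and the weak limit passage with strong--weak uniqueness upgrading subsequential convergence. The only cosmetic difference is that where the paper bounds $|\checkl^{n-1}-\barl|^2_{w^{n-1}}$ by convexity of the squared norm, you use the exact identity $\|a\|^2-h(2-h)\|a-\Pi^0a\|^2$, which exhibits the dissipation explicitly but yields the same conclusion.
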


\begin{proof} 
We start with the heuristic arguments of the proof. It is based on the orthogonality of $\checkl^{n-1}-l^n$ to $\E,$ and orthogonality of $\hatl^{n-1}-\Pi^0_{\hatl^{n-1}}$ to $\E_0.$ Consider the following orthogonal decompositions (w.r.t. weight $w^{n-1}{}={}w(l^{n-1})$):
\begin{eqnarray}
\label{orth:1}\checkl^{n-1}-\barl &=& (l^n-\barl){}+{}(\checkl^{n-1}-l^n),\\
\label{orth:2}\hatl^{n-1}-\barl &=& (\Pi^0_{\hatl^{n-1}}-\barl){}+{}(\hatl^{n-1}{}-{}\Pi^0_{\hatl^{n-1}}).
\end{eqnarray}
From the first equation we obtain
\[
|l^n{}-{}\barl|^2_{w^{n-1}}{}\leq{}|\checkl^{n-1}-\barl|^2_{w^{n-1}}{}\leq{}(1-h)|\hatl^{n-1}-\barl|^2_{w^{n-1}}{}+{}h|\Pi^0_{\hatl^{n-1}}-\barl|^2_{w^{n-1}}.
\]
Here and below $h\in(0,1).$ From the second equation we get
\[
|\Pi^0_{\hatl^{n-1}}-\barl|^2_{w^{n-1}}\leq |\hatl^{n-1}-\barl|^2_{w^{n-1}},
\]
which leads to 
\[
|l^n{}-{}\barl|^2_{w^{n-1}}{}\leq{}|\hatl^{n-1}-\barl|^2_{w^{n-1}}.
\]
It implies, as explained in theorem \ref{theorem:1},
\[
\int |l^n{}-{}\barl|^2_{w^{n}}\,dx{}\leq{}\int |l^{n-1}-\barl|^2_{w^{n-1}}\,dx{}+{}O(h),
\]
where $O(h)$ measures the change in weights.
Differentiating equations \eqref{orth:1} and \eqref{orth:2} in $x,$
we get:
\begin{eqnarray*}
\label{orth:3}D^\alpha_x(\checkl^{n-1}-\barl) &=& D^\alpha_x(l^n-\barl){}+{}D^\alpha_x(\checkl^{n-1}-l^n),\\
\label{orth:4}D^\alpha_x(\hatl^{n-1}-\barl) &=& D^\alpha_x(\Pi^0_{\hatl^{n-1}}-\barl){}+{}D^\alpha_x(\hatl^{n-1}{}-{}\Pi^0_{\hatl^{n-1}}).
\end{eqnarray*}
It follows that
\[
|D^\alpha_x(l^n-\barl)|^2_{w^{n-1}}{}\leq{}|D^\alpha_x(\checkl^{n-1}-\barl|^2_{w^{n-1}}{}+{}R_n,
\]
and 
\[
|D^\alpha_x(\Pi^0_{\hatl^{n-1}}-\barl)|^2_{w^{n-1}}{}\leq{}|D^\alpha_x(\hatl^{n-1}-\barl)|^2_{w^{n-1}}{}+{}\tilde{R}_n,
\]
where $\int R_n\,dx,\,\int \tilde{R}_n\,dx{}={}O(h).$ By varying the weights we arrive at
\[
\int|D^\alpha_x(l^n-\barl)|^2_{w^n}\,dx{}={}\int|D^\alpha_x(l^{n-1}-\barl|^2_{w^{n-1}}\,dx{}+{}O(h).
\]
By appropriately changing the arguments of lemmas  \ref{lemma:1}--\ref{lemma:Final} we obtain
\begin{lemma}
There are $T,C>0$ such that for all $n=0..\lceil T/h\rceil,$
\[
\|l^n-\barl\|_{X(l^n)}{}\leq C.
\]
For all $x\in\mathbb{R}^3,$ $l^n(x,\cdot)\in P(R,\delta_1,r,\delta_2).$
\end{lemma}
We omit the proof of this lemma.

A continuous process $l^h(x,t,v),$ $t\in[0,T]$ is constructed by a linear interpolation between states $l^n, \hatl^n, \checkl^n,$ and $l^{n+1}$ on interval $[nh,(n+1)h]:$ \\
for $t\in[nh, (n+1/3)h],$ 
\begin{equation}
\label{inter:1}
l^h(x,t,v){}={}l^n(x-(t-nh)3v,v);
\end{equation}
for $t\in[(n+1/3)h,(n+2/3)h],$
\begin{equation}
\label{inter:2}
l^h(x,t,v){}={}\left( 1-3\frac{t-(n+1/3)h}{h}\right)l^n(x-hv,v){}+{}3\frac{t-(n+1/3)h}{h}\checkl^n(x,v);
\end{equation}
for $t\in[(n+2/3)h,(n+1)h],$
\begin{equation}
\label{inter:3}
l^h(x,t,v){}={}\left( 1-3\frac{t-(n+2/3)h}{h}\right)\checkl^n(x,v){}+{}3\frac{t-(n+2/3)h}{h}l^{n+1}(x,v).
\end{equation}
Repeating the convergence arguments of theorem \ref{theorem:1} we find that $l^h$ converges (uniformly on compact sets of $\mathbb{R}^3\times[0,T]\times\mathbb{R}^3$ to a Lipschitz continuous function $l,$ which is a classical solution of \eqref{model:2}.
\end{proof}

\end{section}

\begin{section}{Appendix}
\subsubsection{Lemmas from the theory of Sobolev's spaces}
\begin{lemma} 
\label{Sobolev:1}
Let $f$ be a measurable function such that for some constant $\bar{f},$ $f-\bar{f}\in H^3(\mathbb{R}^3).$ There is $C>0,$ independent of $f,$ and $\bar{f},$ such that
\[
\esssup_x|\grad f|\leq{}C\|f-\bar{f}\|_{H^3(\mathbb{R})^3}.
\]
\end{lemma}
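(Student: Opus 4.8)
The plan is to reduce the claim to the standard Sobolev embedding $H^2(\mathbb{R}^3)\hookrightarrow L^\infty(\mathbb{R}^3)$, applied component-by-component to the gradient. Since $\bar f$ is constant, $\grad f{}={}\grad(f-\bar f)$, so it suffices to bound each first-order partial derivative $g_i{}={}\partial_{x_i}(f-\bar f)$ in $L^\infty$. Each $g_i$ lies in $H^2(\mathbb{R}^3)$, and its $H^2$ norm is controlled by $\|f-\bar f\|_{H^3}$. Hence the whole estimate follows at once from a uniform bound
\[
\esssup_x|g(x)|{}\leq{}C\|g\|_{H^2(\mathbb{R}^3)},\qquad g\in H^2(\mathbb{R}^3),
\]
with $C$ depending only on the dimension $d=3$.

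First I would establish this embedding by Fourier analysis. For $g\in H^2(\mathbb{R}^3)$ one has $\|g\|_{L^\infty}\leq \|\hat g\|_{L^1}$, and then one splits the integrand against the Bessel weight and applies Cauchy--Schwarz:
\[
\int|\hat g(\xi)|\,d\xi{}={}\int (1+|\xi|^2)^{-1}\,(1+|\xi|^2)|\hat g(\xi)|\,d\xi
{}\leq{}\left(\int (1+|\xi|^2)^{-2}\,d\xi\right)^{1/2}\|g\|_{H^2}.
\]
The second factor uses the equivalence of $\|(1+|\xi|^2)\hat g\|_{L^2}$ with $\|g\|_{H^2}$ via Plancherel.

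The one genuine point to verify — and the only place the dimension enters — is the convergence of the constant integral $\int_{\mathbb{R}^3}(1+|\xi|^2)^{-2}\,d\xi$. Passing to polar coordinates, this is comparable to $\int_0^\infty r^{2}(1+r^2)^{-2}\,dr$, whose integrand decays like $r^{-2}$ at infinity and is bounded near the origin, hence is finite. This is exactly the threshold condition $s>d/2$ with $s=2$, $d=3$; the finiteness fails in dimension $d\geq 4$, which is why the argument is specific to $\mathbb{R}^3$. I do not anticipate any real obstacle here, as this is a classical computation; the main care is simply to track that the constant $C$ produced is independent of $g$, and therefore of $f$ and $\bar f$, so that it may be quoted uniformly in $n$ throughout the energy estimates of Section 2.

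Finally I would assemble the pieces: applying the embedding to each $g_i$ and summing over $i=1,2,3$ gives $\esssup_x|\grad f|\leq C\sum_i\|g_i\|_{H^2}\leq C\|f-\bar f\|_{H^3}$, which is the assertion of the lemma.
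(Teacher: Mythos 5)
Your proof is correct. Note that the paper itself states this lemma in the Appendix without proof, quoting it as a standard fact from the theory of Sobolev spaces, so there is no authorial argument to diverge from; your route --- reducing to the embedding $H^2(\mathbb{R}^3)\hookrightarrow L^\infty(\mathbb{R}^3)$ applied to each component of $\grad(f-\bar f)$, and proving that embedding by Fourier inversion, the splitting $|\hat g| = (1+|\xi|^2)^{-1}(1+|\xi|^2)|\hat g|$, Cauchy--Schwarz, and the finiteness of $\int_{\mathbb{R}^3}(1+|\xi|^2)^{-2}\,d\xi$ --- is the canonical one, and you correctly identify both the reason the constant is independent of $f$ and $\bar f$ (the constant is the fixed integral $\bigl(\int(1+|\xi|^2)^{-2}\,d\xi\bigr)^{1/2}$ times universal Plancherel constants) and the dimensional threshold $s=2>d/2=3/2$ that makes the computation work.
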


\begin{lemma}
\label{Sobolev:2}
Let $f\in H^1(\mathbb{R}^3).$ Then, for all $h\in\mathbb{R}^3,$
\[
\int |f(x+h)-f(x)|^2\,dx{}\leq{}|h|^2\int|\grad f|^2\,dx.
\]
\end{lemma}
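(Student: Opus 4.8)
The plan is to prove the translation-difference estimate first for smooth, compactly supported functions, where everything is justified pointwise, and then pass to the general case by density. So let me begin by assuming $f\in C_0^\infty(\Rt)$ and reducing the problem to a one-dimensional application of the fundamental theorem of calculus along the segment joining $x$ to $x+h$.

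First I would write, for fixed $x$ and $h$,
\[
f(x+h)-f(x){}={}\int_0^1\frac{d}{dt}f(x+th)\,dt{}={}\int_0^1 h\cdot\grad f(x+th)\,dt.
\]
Treating the $t$--integral as an average over the probability space $[0,1]$, Jensen's inequality (equivalently Cauchy--Schwarz on $[0,1]$), together with the pointwise bound $|h\cdot\grad f|\leq|h|\,|\grad f|$, gives
\[
|f(x+h)-f(x)|^2{}\leq{}\int_0^1|h\cdot\grad f(x+th)|^2\,dt{}\leq{}|h|^2\int_0^1|\grad f(x+th)|^2\,dt.
\]

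Next I would integrate this inequality in $x$ over $\Rt$ and apply Tonelli's theorem to exchange the order of integration in $x$ and $t$, which is legitimate since the integrand is nonnegative. For each fixed $t$, the translation invariance of Lebesgue measure gives
\[
\int_{\Rt}|\grad f(x+th)|^2\,dx{}={}\int_{\Rt}|\grad f(y)|^2\,dy,
\]
so the inner integral is independent of $t$ and the trivial integration $\int_0^1\,dt=1$ yields the claimed inequality for $f\in C_0^\infty(\Rt)$.

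Finally, I would remove the smoothness hypothesis by density: given $f\in H^1(\Rt)$, choose $f_j\in C_0^\infty(\Rt)$ with $f_j\to f$ in $H^1$. The inequality holds for each $f_j$, and both sides are continuous in the $H^1$ norm — the left side because translation is an isometry of $L^2$, the right side because $\grad f_j\to\grad f$ in $L^2$ — so the estimate passes to the limit. I expect the only point requiring genuine care to be this approximation step, namely confirming that $\|f_j-f\|_{H^1}\to0$ forces both the translated differences $f_j(\cdot+h)-f_j$ and the gradients $\grad f_j$ to converge in $L^2$ to their counterparts for $f$; each is immediate, so the obstacle is a matter of bookkeeping rather than substance. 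As an alternative self-contained route one may argue on the Fourier side, where Plancherel's identity gives $\int|f(x+h)-f(x)|^2\,dx=\int|e^{2\pi i\xi\cdot h}-1|^2|\hat f(\xi)|^2\,d\xi$, and the elementary bound $|e^{i\theta}-1|^2=4\sin^2(\theta/2)\leq\theta^2$ applied with $\theta=2\pi\xi\cdot h$, together with $|\xi\cdot h|\leq|\xi|\,|h|$, reproduces the same constant $|h|^2\int|\grad f|^2\,dx$ directly.
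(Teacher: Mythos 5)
Your proof is correct: the reduction to $f\in C_0^\infty(\mathbb{R}^3)$ via the fundamental theorem of calculus along the segment, Cauchy--Schwarz in $t$, Tonelli plus translation invariance, and the closing density argument are all sound, and your Fourier alternative indeed reproduces the same constant $|h|^2$. The paper states this lemma in its appendix without proof, as a standard fact from Sobolev space theory, and the argument it implicitly relies on is exactly the one you give, so there is nothing further to reconcile.
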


\subsubsection{Equivalence of norms in $v$--variable}
By assumption on the linear independence of polynomials $l_i(v)$ spanning $\E,$ restriction of $\E$ to any ball is a $k+1$--dimensional vector space. Any two norms on $\E$ are equivalent to $\|l\|{}={}(\sum_i \gamma_i^2)^{1/2},$ for $l=\sum_i \gamma_il_i(v).$ In particular, if   $B_R$ and $B_r$ are two balls, norms 
\[
\|l\|_{L^p(B_R)},\,\|l\|_{L^p(B_r)},\quad p\in[1,+\infty],
\]
are equivalent. 
This property implies the following lemma.
\begin{lemma}
Consider set
\[
\bar{E}^*{}={}\left\{ \sum \gamma_i(x)l_i(v)\,:\, \gamma_i\in H^m(\mathbb{R}^3),\,i=1..k\right\},
\]
for some $m\geq 0.$  For any $p,q\in[1,+\infty],$ norms
\[
\|l\|_{L^p(B_R;H^m(\mathbb{R}^3))},\, \|l\|_{L^q(B_R;H^m(\mathbb{R}^3))},
\]
on $\bar{E}^*,$ are equivalent. 

For any $p\in[1,+\infty],$ norms
\[
\|l\|_{L^p(B_R;H^m(\mathbb{R}^3))},\, \|l\|_{L^p(B_r;H^m(\mathbb{R}^3))},
\]
on $\bar{E}^*,$ are equivalent.
\end{lemma}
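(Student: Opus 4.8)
The plan is to reduce both equivalences to the elementary finite-dimensional principle already used for $\E$ at the start of this appendix — that over a fixed ball all integral (quasi)norms on a finite-dimensional space of polynomials are comparable. The one point requiring care is that taking the inner $H^m$-norm in $x$ produces a function of $v$ that is \emph{not} a polynomial; its \emph{square}, however, is, and that is what unlocks the argument. First I would record the representation of the norm. Fix $l=\sum_i\gamma_i(x)l_i(v)\in\bar{E}^*$. Since $D^\alpha_x l(x,v)=\sum_i (D^\alpha\gamma_i)(x)\,l_i(v)$, expanding the square and integrating in $x$ gives, for every $v$,
\[
\|l(\cdot,v)\|_{H^m(\mathbb{R}^3)}^2=\sum_{i,j}G_{ij}\,l_i(v)l_j(v)=:F(v)^2,\qquad G_{ij}:=\sum_{|\alpha|\le m}\int_{\mathbb{R}^3}D^\alpha\gamma_i\,D^\alpha\gamma_j\,dx .
\]
Thus $F^2$ is a polynomial in $v$ lying in the \emph{fixed}, finite-dimensional space $W:=\span\{\,l_il_j:1\le i,j\le k\,\}$, and $G=(G_{ij})$ is the positive semidefinite $H^m$-Gram matrix of the coefficients. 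Writing $s:=p/2\in[\tfrac12,\infty]$, one then has, for either ball $B\in\{B_R,B_r\}$,
\[
\|l\|_{L^p(B;H^m(\mathbb{R}^3))}=\Big(\int_B F(v)^p\,dv\Big)^{1/p}=\|F^2\|_{L^{s}(B)}^{1/2},
\]
with the obvious $\esssup$ reading when $p=\infty$.

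Next I would invoke finite-dimensional norm equivalence on $W$. A nonzero polynomial cannot vanish a.e.\ on an open ball, so for every $s\in(0,\infty]$ and every such ball $B$ the functional $g\mapsto\|g\|_{L^s(B)}$ is a genuine (quasi)norm on $W$; since $W$ is finite-dimensional, all of these are pairwise equivalent, with constants depending only on $s$, $B$, $m$, and the fixed polynomials $l_i$, but never on $g\in W$. This is precisely the equivalence-of-norms principle stated for $\E$ at the start of the appendix, now applied to the space of products $W$ in place of $\E$.

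Combining the two ingredients yields both claims, uniformly in $l\in\bar{E}^*$. For the first statement, fixing $B_R$ and exponents $p,q$, the reduction identity together with $\|F^2\|_{L^{p/2}(B_R)}\asymp\|F^2\|_{L^{q/2}(B_R)}$ on $W$ gives $\|l\|_{L^p(B_R;H^m)}\asymp\|l\|_{L^q(B_R;H^m)}$. For the second, fixing $p$ and using $\|F^2\|_{L^{p/2}(B_R)}\asymp\|F^2\|_{L^{p/2}(B_r)}$ on $W$ gives $\|l\|_{L^p(B_R;H^m)}\asymp\|l\|_{L^p(B_r;H^m)}$. If desired, one may further identify all these quantities with the natural coefficient norm $\big(\sum_i\|\gamma_i\|_{H^m}^2\big)^{1/2}=(\tr G)^{1/2}$, by noting that $\int_B F^2=\tr(GM_B)$ with $M_B=\big(\int_B l_il_j\big)_{ij}$ the positive-definite mass matrix, whence $\tr(GM_B)\asymp\tr G$ for $G\succeq0$; but this refinement is not needed for the stated equivalences.

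The only genuine subtlety — and the step I expect to be the crux — is the one flagged at the outset: the inner norm $F(v)=\|l(\cdot,v)\|_{H^m}$ is a square root of a quadratic form and hence does \emph{not} lie in any fixed finite-dimensional space, so one cannot apply finite-dimensional equivalence to $F$ directly. Passing to $F^2\in W$ via the identity $\|l\|_{L^p(B;H^m)}=\|F^2\|_{L^{p/2}(B)}^{1/2}$ is exactly what makes the finite-dimensional argument applicable, and it continues to work in the regime $p<2$, where $L^{p/2}$ is merely a quasinorm — the finite-dimensional equivalence holding for quasinorms just as for norms. Everything else reduces to standard linear algebra and compactness of the unit sphere in $W$.
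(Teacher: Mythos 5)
Your proof is correct, and it in fact supplies details that the paper leaves implicit: the paper offers no written proof of this lemma beyond the remark preceding it --- that the restriction of the finite-dimensional space $E^*$ to any ball makes all integral norms equivalent to the coefficient norm $(\sum_i\gamma_i^2)^{1/2}$ --- followed by the assertion that ``this property implies the following lemma.'' Your route is in the same spirit (everything reduces to equivalence of integral norms on a fixed finite-dimensional space of polynomials over a fixed ball), but you correctly identify and resolve the point the paper glosses over: the inner norm $F(v)=\|l(\cdot,v)\|_{H^m}$ is \emph{not} a polynomial in $v$, so the finite-dimensional principle cannot be applied to $F$ directly. Your device --- passing to $F^2=\sum_{i,j}G_{ij}l_i(v)l_j(v)$, which lies in the fixed space $W=\mathrm{span}\{l_il_j\}$, using the identity $\|l\|_{L^p(B;H^m)}=\|F^2\|_{L^{p/2}(B)}^{1/2}$, and invoking quasinorm equivalence on $W$ (legitimate for $p/2<1$ by the usual homogeneity-plus-compactness argument, since nonzero polynomials cannot vanish a.e.\ on a ball) --- closes that gap cleanly and uniformly in $l$. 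An alternative route, probably closer to what the author had in mind and essentially contained in your closing remark, is to compare every mixed norm directly with the coefficient norm $(\sum_i\|\gamma_i\|_{H^m}^2)^{1/2}=(\mathrm{tr}\,G)^{1/2}$: the $L^2$ case is the trace identity $\int_B F^2=\mathrm{tr}(GM_B)\asymp\mathrm{tr}\,G$ with $M_B$ the positive-definite mass matrix; the upper bound for any $p$ follows from the pointwise bound $F\leq C_B(\mathrm{tr}\,G)^{1/2}$ on $B$; and the lower bound for $p<2$ follows from $\int_B F^2\leq\|F\|_{L^\infty(B)}^{2-p}\int_B F^p$ combined with the two previous facts. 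Both arguments give the same uniform constants; yours isolates the finite-dimensional equivalence as the single black box (at the price of working with quasinorms), while the coefficient-norm route avoids quasinorms entirely and exhibits the common equivalent norm explicitly.
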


\end{section}


\begin{thebibliography}{XXX}



\bibitem{BB} F.~Berthelin and F.~Bouchut, {\sl Relaxation to isentropic gas dynamics for a BGK system with single kinetic entropy,\/} Methods Appl. Anal., 9 (2002), p. 313--327.




\bibitem{BV} F.~Berthelin and A.~Vasseur, {\sl From kinetic equations to multidimensional isentropic gas dynamics before shocks,\/} SIAM J. Math. Anal. vol. 36, 6, (2003) p.1807--1835.



\bibitem{Brenier1} Y.~Brenier, {\sl Averaged multivalued solutions for scalar conservation laws,\/} SIAM J. Numer. Anal. 27 (1984), n. 6, p. 1013--1037.

\bibitem{Brenier2} Y.~Brenier, {\sl $L^2$ formulation of multidimensional scalar conservation laws,\/} Arch. Rat. Mech. Anal. 193(1) (2009), p. 1--19.

\bibitem{BrenierCorias} Y.~Brenier and L.~Corrias, {\sl A kinetic formulation for multi-branch entropy solutions of scalar conservation laws,\/} Ann. Inst. Henri Poincar\'{e} 15 (1998), n. 2, p. 196--190.




\bibitem{Caflisch} R.~Caflisch, {\sl The fluid dynamic limit of the nonlinear Boltzmann equation,\/}
Comm. Pure Appl. Math. 33 (1980), p. 651--666.

\bibitem{CaflischPapa} R.~Caflisch, and G.~Papanicolaou {\sl The fluid-dynamical limit of a nonlinear model boltzmann equation,\/} Comm. Pure Appl. Math. 32(5) (1979), p. 589--616.


\bibitem{Cercignani} C.~Cercignani, {\sl The Boltzmann Equation and Its Applications,\/} Springer-Verlag, New York, 1988.

\bibitem{Dafermos} C.~M.~Dafermos, {\sl Hyperbolic Conservation Laws in Continuum Physics,\/} Springer, 2010.

\bibitem{Dreyer} W.~Dreyer, {\sl Maximization of entropy in non-equilibrium,\/} J. Phys. A: Math Gen. 20 (1987), p. 6505--6517.


\bibitem{Ekland} I.~Ekland, and R.~Temam, {\sl Convex Analysis and Variational Problems,\/}
SIAM Classics in Applied Mathematics, 1999.



\bibitem{Garding} L.~G{a}rding, {\sl Probleme de Cauchy pour les systemes quisi-lineares d'order un strictement hyperboliques,\/} In Les Equations aux Derivees Partielles, Paris, 1962.

\bibitem{GM} Y.~Giga, R.~Miyakawa, {\sl A kinetic construction of global solutions of first order quasilinear equations,\/} Duke Math. J., 50 (1983), p. 505--515.



\bibitem{Grad:1} H.~Grad, {\sl Asymptotic equivalence of the Navier-Stokes and Nonlinear Boltzmann equations,\/} AEC Research and Development Report, NYU, (1964).

\bibitem{Grad:moments} H.~Grad, {\sl On the kinetic theory of rarefied gases,\/} Pure Appl. Math 2 (1949), p. 331--407.

\bibitem{Grad:asymptote} H.~Grad, {\sl Asymptotic theory of the Boltzmann equation,\/} Phys. FI. 6 (1963),
p. 147--181.



\bibitem{Gorban1} A.N.~Gorban, and I.V.~Karlin, {\sl Thermodynamic parametrization,\/} Physica A: Math. Gen. 190 (1992), p. 393--404.

\bibitem{Gorban2}  A.N.~Gorban, and I.V.~Karlin, {\sl Method of invariant manifolds and the regularization of acoustic spectra,\/} Transport Theor. Stat. Phys. 23 (1994),  p. 559--632.

\bibitem{Gorban3} A.N.~Gorban, I.V.~Karlin, H.Ch.~\"{O}ttinger, and L.L.~Tatarinova, {\sl Ehrenfest's argument extended to a formalism of non-equilibrium thermodynamics,\/} Phys. Review E. (63) (2001), p. 066124.1--066124.6.




\bibitem{Hilbert} D.~Hilbert, {\sl Begr\"{u}ndung der kinetischen Gastheorie,\/} Math. Annalen, 72(4), (1912), p. 562--577.


\bibitem{Kato} T.~Kato, {\sl The Cauchy problem for quasi-linear symmetric hyperbolic systems,\/} Arch. Rat. Mech. Anal.  58(3) (1975), p. 181--205.


\bibitem{Levermore} C.D.~Levermore, {\sl Moment Closure Hierarchies for Kinetic Theories,\/}
 J. Stat. Physics 83(5/6), (1996), p. 1021--1065.



\bibitem{Nishida_Boltzmann} T.~Nishida, {\sl Fluid Dynamical Limit of the Nonlinear Boltzmann Equation to the Level of the Compressible Euler Equation,\/} Comm. Math. Phys. 61 (1978), p. 119--148. 





















\end{thebibliography}
\end{document}